\documentclass[a4paper]{amsart}
\usepackage{amsmath,amsthm,amssymb}
\usepackage{mathtools}
\usepackage{scalerel}
\usepackage[shortlabels]{enumitem}
\usepackage{thmtools,thm-restate}
\makeatletter
\@ifundefined{newcounteralias}{}{%
  \renewcommand\thmt@autorefsetup{\@xa\def\csname\thmt@envname autorefname\@xa\endcsname\@xa{\thmt@thmname}}%
}
\makeatother
\usepackage{todonotes}
\usepackage{hyperref}
\usepackage[capitalize]{cleveref}
\usepackage{orcidlink}

\usepackage[
  hyperref=auto,
  mincrossrefs=999,
  backend=biber,
 style=numeric,
 giveninits
]{biblatex}

\DeclareMathOperator{\Aut}{Aut}

\DeclareMathOperator{\Class}{Class}

\DeclareMathOperator{\Coin}{Coin}
\DeclareMathOperator{\coker}{coker}
\DeclareMathOperator{\End}{End}

\DeclareMathOperator{\Fix}{Fix}
\DeclareMathOperator{\GL}{GL}
\DeclareMathOperator{\Hom}{Hom}
\DeclareMathOperator{\Id}{Id}
\DeclareMathOperator{\im}{Im}
\DeclareMathOperator{\Inn}{Inn}
\DeclareMathOperator{\Irr}{Irr}

\DeclareMathOperator{\SpecR}{Spec_{R}}

\DeclareMathOperator{\Stab}{Stab}
\DeclareMathOperator{\Tr}{Tr}
\newcommand*{\B}{\mathcal{B}}
\newcommand*{\C}{\mathbb{C}}
\newcommand*{\card}[1]{\#{#1}}
\newcommand*{\Conj}{\mathcal{C}}
\newcommand*{\conj}{\sim}
\newcommand*{\cconj}[1]{\overline{#1}}

\newcommand*{\dual}[1]{\widehat{#1}}

\newcommand*{\F}{\mathbb{F}}

\newcommand*{\grpgen}[1]{\langle#1 \rangle}
\newcommand*{\grppres}[2]{\langle#1 \mid #2 \rangle}

\newcommand*{\ie}{i.e.\ }

\newcommand*{\ind}[1]{\Delta_{#1}}

\newcommand*{\inprodarg}[2]{\left\langle #1, #2 \right \rangle}
\newcommand*{\inv}[1]{{#1}^{-1}}
\newcommand*{\invrestr}[2]{{#1}|_{#2}^{-1}}

\newcommand*{\N}{\mathbb{N}}

\newcommand*{\Rconj}[1]{\sim_{#1}}
\newcommand*{\Reid}{\mathcal{R}}

\newcommand*{\restr}[2]{{#1}|_{#2}}
\newcommand*{\Rinf}{R_{\infty}}

\newcommand*{\Z}{\mathbb{Z}}

\renewcommand{\phi}{\varphi}

\let\originalleft\left
\let\originalright\right
\renewcommand{\left}{\mathopen{}\mathclose\bgroup\originalleft}
\renewcommand{\right}{\aftergroup\egroup\originalright}

\declaretheorem[style=definition, name = Definition, numberwithin=section]{defin}
\declaretheorem[style=definition, name = Example, sibling=defin]{example}
\declaretheorem[name = Theorem, sibling=defin]{theorem}
\declaretheorem[name = Lemma, sibling=defin]{lemma}
\declaretheorem[name = Proposition, sibling=defin]{prop}
\declaretheorem[name = Corollary, sibling=defin]{cor}
\declaretheorem[style=remark, name = Remark, numbered = no]{remark}

\declaretheorem[name = Theorem, numbered = no]{theorem*}
\declaretheorem[name = Question, sibling=defin]{quest}
\declaretheorem[name = Question, numbered = no]{quest*}

\numberwithin{equation}{section}

\crefname{prop}{Proposition}{Propositions}
\crefname{cor}{Corollary}{Corollaries}
\crefname{lemma}{Lemma}{Lemmata}

\newcommand{\set}[2]{\ensuremath{\{\,{#1}\mid{#2}\,\}}}
\newcommand{\restri}[2]{\ensuremath{{\left.\kern-\nulldelimiterspace{#1}\right|}_{#2}}}

\title{Bi-twisted conjugacy in finite groups}
\author[Pieter Senden]{Pieter Senden\,\orcidlink{0000-0002-3107-6775}}
\author[Sam Tertooy]{Sam Tertooy\,\orcidlink{0000-0002-5750-9153}}
\date{\today}
\address{KU Leuven Campus Kulak Kortrijk\\
	E.~Sabbelaan 53\\
	8500 Kortrijk\\
	Belgium}
\email{sam.tertooy@kuleuven.be}
\email{pieter.senden@telenet.be}

\subjclass[2020]{Primary: 20D45; Secondary: 20E45}
\keywords{Twisted conjugacy, finite groups, Reidemeister number}

\begin{document}
\begin{abstract}
	We provide two alternative ways to determine the number of bi-twisted conjugacy classes in a finite group: one using irreducible characters and one using ordinary conjugacy classes.
	In addition, we show various equalities and (sharp) inequalities for Reidemeister numbers, as well as relations between bi-twisted conjugacy, representation theory, and fixed-point free automorphisms.
\end{abstract}

\maketitle
\section{Introduction}
	Let \(G\) be a group and \(\phi, \psi \in \End(G)\).
	We say that two elements \(x, y \in G\) are \emph{\((\phi, \psi)\)-conjugate} if \(x = \phi(z) y \inv{\psi(z)}\) for some \(z \in G\), in which case we write \(x \Rconj{\phi, \psi} y\).
	We let \([x]_{\phi, \psi}\) denote the \((\phi, \psi)\)-conjugacy class of \(x\) and write \(\Reid[\phi, \psi]\) for the set of all \((\phi, \psi)\)-conjugacy classes.
	The number of \((\phi, \psi)\)-conjugacy classes is called the \emph{Reidemeister number} of \((\phi, \psi)\) and is denoted by \(R(\phi, \psi)\).
	In general, we speak of \emph{bi-twisted conjugacy}.
	
	Equivalently, we can view bi-twisted conjugacy as a group action,
	\[
		G \times G \to G \colon (g, h) \mapsto g \cdot h \coloneq \phi(g) h \inv{\psi(g)}
	\]
	for which we let \(\Stab_{\phi, \psi}(x)\) denote the \((\phi, \psi)\)-stabiliser of \(x \in G\), \ie
	\[
		\Stab_{\phi, \psi}(x) \coloneq \set{g \in G}{\phi(g)x\inv{\psi(g)} = x}.
	\]
	
Bi-twisted conjugacy has been studied in various contexts and in various degrees of generalisation: A.\ Fel'shtyn and B.\ Klopsch, as well as V.\ Roman'kov, studied it for nilpotent groups \cite{FelshtynKlopsch22,Romankov16}, and the second author studied it for twisted conjugacy separability with extension to \(\phi, \psi \in \Hom(H, G)\) for some other group \(H\) \cite{Tertooy25}.

For \(\phi = \Id\), we get (single) twisted conjugacy with Reidemeister number \(R(\psi) \coloneq R(\Id, \psi)\).
The notation then simplifies to \(x \Rconj{\psi} y\), \([x]_{\psi}\), \(\Reid[\psi]\) and \(\Stab_{\psi}(x)\).
In twisted conjugacy, an additional concept is often studied, namely the \emph{Reidemeister spectrum} of \(G\), given by \(\SpecR(G) \coloneq \set{R(\psi)}{\psi \in \Aut(G)}\).

Twisted conjugacy in finite groups arises in various places.
T.\ Shintani \cite{Shintani76} showed that the \(F\)-invariant irreducible representations of \(\GL(n, q^{m})\) are in bijective correspondence with the irreducible representations of \(\GL(n, q)\), where \(n \geq 1\), \(q\) is a prime power and \(F \colon \GL(n, q^{m}) \to \GL(n, q^m) \colon (a_{ij})_{ij} \mapsto (a_{ij}^{q})_{ij}\) is the Frobenius map.
He proved this using a method now known as Shintani descent, which concerns \(F\)-twisted conjugacy in \(\GL(n, q^{m})\).
Several generalisations of Shintani's result have been proven since \cite{Shoji85,Shoji87,Shoji92,Deshpande16}.

In linear algebraic groups, P.\ Deligne and G.\ Lusztig parameterised the \(\F_{q}\)-rational conjugacy classes of maximal tori by \(F\)-conjugacy classes in the Weyl group \cite{DeligneLusztig76}, with similar results following later \cite{Lusztig14,Chen22}.

In Nielsen fixed-point theory, which focuses on determining lower bounds on the number of fixed points of a continuous self-map on a topological space, twisted conjugacy is largely studied in infinite groups.
However, one of the scarce results on finite groups in Nielsen theory is the following due to B.\ Jiang \cite{Jiang83}, in the context of fixed-point theory on compact polyhedra with finite fundamental group:
\begin{prop}
	Let \(A\) be a finite abelian group and \(\psi \in \End(A)\).
	Then \(R(\psi) = \card{\coker(\Id - \psi)}\).
\end{prop}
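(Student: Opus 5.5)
The plan is to write the twisted conjugacy relation additively and see that its classes are exactly the cosets of a subgroup. Write \(A\) additively. By definition, \(x \Rconj{\psi} y\) if and only if \(x = z + y - \psi(z)\) for some \(z \in A\), that is, \(x - y = (\Id - \psi)(z)\). Since \(A\) is abelian, \(\Id - \psi\) is again an endomorphism of \(A\). Hence \(x \Rconj{\psi} y\) holds precisely when \(x - y \in \im(\Id - \psi)\).

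It follows that the \(\psi\)-conjugacy class of \(y\) is the coset \(y + \im(\Id - \psi)\). The set \(\Reid[\psi]\) therefore coincides with the set of cosets \(A / \im(\Id - \psi) = \coker(\Id - \psi)\), and so \(R(\psi) = \card{\coker(\Id - \psi)}\).

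Finiteness of \(A\) is not actually used in the bijection. It only guarantees that both sides are finite numbers, and it gives the alternative formula \(\card{\coker(\Id-\psi)} = \card{\ker(\Id - \psi)} = \card{\Fix(\psi)}\), obtained by counting orders in \(A \to A \to \coker\).

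There is no real obstacle here. The only points needing care are checking that \(\Id - \psi\) is a homomorphism, which uses commutativity, and observing that the orbit relation is exactly congruence modulo its image.
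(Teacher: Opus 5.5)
Your proof is correct and is essentially the argument the paper uses. The statement itself is only cited from Jiang, but the closing remark on abelian groups proves \(R(\phi,\psi) = [A : \im(\phi - \psi)]\) via the observation that \(x \Rconj{\phi,\psi} y\) if and only if \(x - y \in \im(\phi - \psi)\); for \(\phi = \Id\) this is exactly your coset description, and your side remark \(\card{\coker(\Id - \psi)} = \card{\Fix(\psi)}\) is also correct.
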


In most of the aforementioned research, twisted conjugacy is used as a tool to answer questions.
Twisted conjugacy in finite groups in and of itself has been studied as well, with one of the first results due to I.\ Ado and R.\ Ree:
\begin{theorem}	\label{theo:AdoReeNagao}
	Let \(G\) be a finite group and \(\psi \in \End(G)\).
	The following quantities are equal:
	\begin{enumerate}[(1)]
		\item The number of \(\psi\)-conjugacy classes, \ie \(R(\psi)\);
		\item The number of \(\psi\)-invariant ordinary conjugacy classes;
		\item The number of \(\psi\)-invariant irreducible complex characters.
	\end{enumerate}
\end{theorem}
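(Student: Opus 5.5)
The plan is to prove (1)\(=\)(2) directly for an arbitrary endomorphism by a Burnside counting argument. For (2)\(=\)(3), we first settle the automorphism case with Brauer's permutation lemma, and then reduce the general endomorphism case to it by passing to the eventual image of \(\psi\).

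\textbf{(1)\(=\)(2).} Consider the action of \(G\) on itself given by \(z\cdot x=zx\inv{\psi(z)}\), whose orbits are the \(\psi\)-conjugacy classes. By Burnside's lemma,
\[
R(\psi)=\frac{1}{|G|}\sum_{z\in G}\card{\set{x\in G}{zx\inv{\psi(z)}=x}}=\frac{1}{|G|}\sum_{z\in G}\card{\set{x\in G}{\inv{x}zx=\psi(z)}}.
\]
The inner set is empty unless \(\psi(z)\) is conjugate to \(z\). If it is non-empty, it is a coset of the centraliser \(C_G(z)\).

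Since \(\psi\) maps conjugacy classes into conjugacy classes, the condition ``\(\psi(z)\) is conjugate to \(z\)'' depends only on the class \(C\) of \(z\). It holds precisely when \(\psi(C)\subseteq C\), that is, when \(C\) is \(\psi\)-invariant. Grouping the sum by classes, each \(\psi\)-invariant class \(C\) contributes \(|C|\,|C_G(z)|/|G|=1\), and every other class contributes \(0\). Hence \(R(\psi)\) equals the number of \(\psi\)-invariant conjugacy classes.

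\textbf{(2)\(=\)(3) for automorphisms.} If \(\psi\in\Aut(G)\), then \(\psi\) permutes the conjugacy classes via \(C\mapsto\psi(C)\), and it permutes \(\Irr(G)\) via \(\chi\mapsto\chi\circ\psi\). Let \(X\) be the character table, with rows indexed by \(\Irr(G)\) and columns by classes. Let \(P\) be the permutation matrix of the action on classes and \(Q\) that of the action on characters. Then \(QX=XP\), because both sides have \((\chi,C)\)-entry \(\chi(\psi(C))\). Since \(X\) is invertible by the orthogonality relations, \(Q\) and \(P\) are similar. In particular \(\Tr Q=\Tr P\), which says the number of fixed characters equals the number of fixed classes.

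\textbf{Reduction for arbitrary endomorphisms.} This is the step I expect to need the most care, since for a non-injective \(\psi\) the map \(\chi\mapsto\chi\circ\psi\) need not preserve irreducibility. Choose \(N\) with \(H\coloneq\psi^N(G)=\psi^{N+1}(G)\). Then \(\restr{\psi}{H}\) is a surjective, hence bijective, endomorphism of the finite group \(H\), so it is an automorphism.

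For characters, a \(\psi\)-invariant \(\chi\in\Irr(G)\) satisfies \(\chi=\chi\circ\psi^N\), so \(\chi\) is determined by its restriction \(\theta\coloneq\restr{\chi}{H}\). We have \(\chi=\theta\circ\psi^N\), where \(\psi^N\colon G\to H\) is surjective. An inflation of this kind is irreducible if and only if \(\theta\) is, so \(\theta\in\Irr(H)\), and \(\theta\) is \(\restr{\psi}{H}\)-invariant. Conversely, for a \(\restr{\psi}{H}\)-invariant \(\theta\in\Irr(H)\), the character \(\theta\circ\psi^N\) is irreducible and \(\psi\)-invariant, and its restriction to \(H\) is \(\theta\circ(\restr{\psi}{H})^N=\theta\). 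This gives a bijection between the \(\psi\)-invariant irreducible characters of \(G\) and the \(\restr{\psi}{H}\)-invariant irreducible characters of \(H\).

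For classes, the analogous comparison goes through (1). Taking \(z=\inv{x}\) in the action shows \(x\Rconj{\psi}\psi(x)\), so every \(\psi\)-class meets \(H\). One then checks that two elements of \(H\) are \(\psi\)-conjugate in \(G\) exactly when they are \(\restr{\psi}{H}\)-conjugate in \(H\). To do this, apply \(\psi^N\) to a conjugating relation \(h'=zh\inv{\psi(z)}\) in \(G\), and use that \(\psi^N\) restricted to \(H\) is a bijection commuting with \(\psi\). This yields \(R_G(\psi)=R_H(\restr{\psi}{H})\).

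Combining the three steps, \(R_G(\psi)=R_H(\restr{\psi}{H})\). By the automorphism case this equals the number of \(\restr{\psi}{H}\)-invariant irreducible characters of \(H\), which by the bijection above equals the number of \(\psi\)-invariant irreducible characters of \(G\). Together with (1)\(=\)(2) for \(G\), all three quantities agree.
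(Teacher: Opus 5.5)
Your proof is correct, but it reaches the character count by a different route than the paper. The paper only cites this theorem and recovers it as the case \(\phi=\Id\) of its bi-twisted formulas.

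For (1)\(=\)(2), your Burnside computation is essentially the paper's. \cref{prop:ReidemeisterNumberInproductBitwistedRepresentation1} is Burnside's lemma in the form \(R(\psi)=\inprodarg{\theta_{\psi}}{1}\). The count \(\card{\set{x\in G}{\inv{x}zx=\psi(z)}}\in\{0,\card{C_G(z)}\}\) is exactly \cref{lem:ExpressionInproductBitwistedRepresentationIrreducibleCharacter}.

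The real difference is in getting to (3). The paper expands the same fixed-point count by column orthogonality as \(\sum_{\chi\in\Irr(G)}\chi(z)\cconj{\chi}(\psi(z))\), which gives \(R(\psi)=\sum_{\chi\in\Irr(G)}\inprodarg{\chi}{\chi\circ\psi}\). It then notes that \(\chi\circ\psi\) is a possibly reducible character of degree \(\chi(1)\). So each term is \(0\) or \(1\), and it is \(1\) exactly when \(\chi\circ\psi=\chi\). This handles endomorphisms directly and extends verbatim to pairs \((\phi,\psi)\), which is the paper's aim.

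Your use of Brauer's permutation lemma needs \(\chi\mapsto\chi\circ\psi\) to be a permutation. That is what forces your eventual-image reduction to \(H=\psi^{N}(G)\). You carry that reduction out correctly, including both bijections and the injectivity check inside \(H\). It also gives the useful byproduct \(R_G(\psi)=R_H(\restr{\psi}{H})\).

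The reduction can be avoided even within your framework. Replace the permutation matrices by the matrix of the linear map \(f\mapsto f\circ\psi\) on \(\Class(G)\). In the basis of class indicators, its diagonal counts the \(\psi\)-invariant classes. In the basis \(\Irr(G)\), its diagonal entries are \(\inprodarg{\chi\circ\psi}{\chi}\in\{0,1\}\). Equality of traces then gives (2)\(=\)(3) for every endomorphism at once. This basis change is essentially what \cref{theo:ReidemeisterNumberBiTwistedEqualsTraceBilinearForm} and \cref{prop:biTwistedConjugacyNumberInTermsOfConjugacyClasses} carry out.
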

Ado proved equality of (1) and (3) for automorphisms \cite{Ado55}, which was then generalised to endomorphisms by Ree \cite{Ree59}.
In the same paper, Ree proved equality of (2) and (3), from which equality of (1) and (2) follows.
H.\ Nagao finally gave a short and direct proof of equality of (1) and (2) \cite{Nagao62}.

The equality of (1) and (2) was rediscovered by A.\ Fel'shtyn and R.\ Hill in \cite[Theorem 5]{FelshtynHill94}, where they used it to show the rationality of the Reidemeister zeta function of an endomorphism of a finite group.
The equality of (1) and (3) was rediscovered by the first author in \cite[Theorem~8.1.7]{Senden23}.

More recently, V.\ Bardakov, T.\ Nasybullov and M.\ Neshchadim \cite{BardakovNasybullovNeshchadim13}, D.\ Gon\c{c}alves and Nasybullov \cite{GoncalvesNasybullov18} and C.\ Nicotera \cite{Nicotera24} have studied (finite) groups where the twisted conjugacy class of the unit element is a subgroup. On the level of Reidemeister spectra, the first author has determined the Reidemeister spectrum of finite abelian groups \cite{Senden23a} and split-metacyclic groups \cite{Senden21a}, whereas the second author has proposed and discussed research questions that serve as analogues of the \(\Rinf\)-property and full Reidemeister spectrum for finite groups \cite{Tertooy25a}.

Finally, the research in (bi\nobreakdash-)twisted conjugacy in finite groups ties in with fixed points of endomorphisms, as shown by the following equivalence:

\begin{prop}[{\cite[Proposition~2.2]{Tertooy25}}]\label{prop:ReidemeisterNumber1}
	Let \(G\) be a finite group and \(\psi \in \End(G)\).
Then \(R(\psi) = 1\) if and only if \(\Fix(\psi) = 1\).
\end{prop}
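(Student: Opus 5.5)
The plan is to read off everything from the orbit–stabiliser theorem for the twisted action \(G \times G \to G \colon (g,h) \mapsto g h \inv{\psi(g)}\), the case \(\phi = \Id\) of the action described in the introduction. We compute the stabiliser of the identity first. By definition, \(\Stab_{\psi}(1) = \set{g \in G}{g\inv{\psi(g)} = 1} = \set{g \in G}{\psi(g) = g} = \Fix(\psi)\). Since \(G\) is finite, orbit–stabiliser then gives
\[
	\card{[1]_{\psi}} = \frac{\card{G}}{\card{\Fix(\psi)}}.
\]

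For the direction ``\(R(\psi) = 1\) implies \(\Fix(\psi) = 1\)'': if there is only one \(\psi\)-conjugacy class, then \([1]_{\psi} = G\). The displayed formula then gives \(\card{\Fix(\psi)} = 1\), so \(\Fix(\psi)\) is trivial.

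For the converse, suppose \(\Fix(\psi) = 1\). Then \(\card{[1]_{\psi}} = \card{G}\), so \([1]_{\psi} = G\). Hence every element is \(\psi\)-conjugate to \(1\), and \(R(\psi) = 1\).

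No step is a real obstacle here. The only points needing care are two. First, the map really is a group action, since \((gk)\cdot h = gk h \inv{\psi(k)}\inv{\psi(g)} = g\cdot(k\cdot h)\); this uses only that \(\psi\) is a homomorphism, so it holds for arbitrary endomorphisms and not just automorphisms. Second, the stabiliser identification is used precisely at the element \(1\). Finiteness of \(G\) is essential, because the counting argument fails for infinite groups. As an alternative, one could instead invoke \cref{theo:AdoReeNagao} and count \(\psi\)-invariant conjugacy classes, but the direct orbit–stabiliser argument is shorter.
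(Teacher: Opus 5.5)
Your proposal is correct and follows the paper's proof: apply orbit–stabiliser to the \(\psi\)-conjugacy class of \(1\), use \(\Stab_{\psi}(1) = \Fix(\psi)\), and conclude that \([1]_{\psi} = G\), i.e.\ \(R(\psi) = 1\), exactly when \(\Fix(\psi)\) is trivial. Your explicit check that the twisted action is a group action for arbitrary endomorphisms is a harmless addition; the paper takes it for granted.
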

Here, \(\Fix(\psi)\) is the set of fixed points of \(\psi\), that is,
\[
	\Fix(\psi) \coloneq \{g \in G \mid g = \psi(g)\}.
\]
\begin{proof}
	By the orbit-stabiliser theorem, \(\card{[1]_{\psi}} \cdot \card{\Stab_{\psi}(1)} = \card{G}\).
	Since
	\[
		\Stab_{\psi}(1) = \set{g \in G}{g\inv{\psi(g)} = 1} = \set{g \in G}{g = \psi(g)} = \Fix(\psi),
	\]
	we conclude that \(\Fix(\psi) = 1\) if and only if \(\card{[1]_{\psi}} = \card{G}\).
	The latter is equivalent with \(R(\psi) = 1\).
\end{proof}

In other words, determining which finite groups admit an automorphism with Reidemeister number equal to \(1\) is equivalent with determining which finite groups admit an automorphism with no non-trivial fixed points.
Such an automorphism is called \emph{fixed-point free}.
One of the major results on this topic is due to P.\ Rowley:
\begin{theorem}[{\cite{Rowley95}}]
	Let \(G\) be a finite group.
If \(G\) admits a fixed-point free automorphism, then \(G\) is solvable.	
\end{theorem}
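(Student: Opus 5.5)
The statement is Rowley's theorem, which the paper cites rather than proves. My plan is the classical reduction to the nonabelian simple case, finished by the classification of finite simple groups. Let \(G\) be a counterexample of minimal order with a fixed-point free automorphism \(\alpha\).

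The first step is to show that fixed-point freeness passes to \(\alpha\)-invariant subgroups and quotients. For an \(\alpha\)-invariant subgroup \(H\), the restriction \(\restr{\alpha}{H}\) is clearly fixed-point free. By \cref{prop:ReidemeisterNumber1} and the orbit-stabiliser argument in its proof, the map \(g \mapsto \inv{g}\alpha(g)\) is then a bijection of \(H\). Now let \(N\) be \(\alpha\)-invariant and normal, and suppose \(\alpha(gN) = gN\). Then \(\inv{g}\alpha(g) \in N\), so \(\inv{g}\alpha(g) = \inv{h}\alpha(h)\) for some \(h \in N\). Injectivity of this map on \(G\) gives \(g = h \in N\), so \(\alpha\) induces a fixed-point free automorphism of \(G/N\).

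Next, I use minimality. If \(G\) had a proper nontrivial \(\alpha\)-invariant normal subgroup \(N\), then both \(N\) and \(G/N\) carry fixed-point free automorphisms. By minimality both are solvable, hence so is \(G\), a contradiction. So \(G\) is \(\alpha\)-characteristically simple, hence a minimal normal subgroup of \(G \rtimes \grpgen{\alpha}\). Therefore \(G \cong T^{k}\) with \(T\) nonabelian simple, and \(\alpha\) permutes the factors transitively, since otherwise an orbit of factors would give a smaller invariant normal subgroup. Write \(\alpha^{k}(T_1) = T_1\) and \(\beta \coloneq \restr{\alpha^{k}}{T_1}\). If \(t \in T_1\) were a nontrivial fixed point of \(\beta\), then \(t\,\alpha(t)\cdots\alpha^{k-1}(t)\), a product of commuting elements in distinct factors, would be a nontrivial fixed point of \(\alpha\). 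Hence \(\beta\) is a fixed-point free automorphism of the nonabelian simple group \(T\).

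The final and hardest step is to show that no nonabelian simple group has a fixed-point free automorphism. This needs CFSG together with the structure \(\Aut(T) = \text{(inner-diagonal)} \rtimes \text{(field)} \rtimes \text{(graph)}\) for groups of Lie type.
\begin{itemize}
\item If \(\beta\) is inner, say conjugation by \(x \neq 1\), then \(x\) itself is a nontrivial fixed point.
\item For alternating and sporadic groups, \(\lvert \mathrm{Out}(T) \rvert\) is tiny, and one checks directly that every automorphism centralises a nontrivial element.
\item For groups of Lie type, I would use that an automorphism of \(T\) is induced by a Steinberg-type endomorphism \(F\) of the ambient algebraic group, possibly composed with an inner-diagonal element. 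The Lang–Steinberg theorem identifies the fixed points with a finite group of Lie type \(\mathbf{G}^{F'}\), which is nontrivial, or at least contains a nontrivial torus or unipotent element.
\end{itemize}
A useful shortcut I would try first is a counting argument: a fixed-point free \(\beta\) fixes a unique Sylow \(p\)-subgroup for each prime \(p\), and by \cref{theo:AdoReeNagao} it leaves exactly one conjugacy class invariant. Hence \(\beta\) must move every nontrivial class of \(T\). I would aim to contradict this by exhibiting an \(\mathrm{Out}(T)\)-invariant nontrivial class, such as a class of unipotent elements of a fixed root-length type, or the class of long-root elements. This step is the main obstacle, and it is where the case-by-case verification over the classification is unavoidable.
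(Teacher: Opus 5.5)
The paper gives no proof of this statement; it quotes it from \cite{Rowley95}, and that result depends on the classification of finite simple groups. Your reduction is correct and is the standard first half of such an argument. Injectivity of \(g \mapsto \inv{g}\alpha(g)\) on \(G\), together with its surjectivity on an \(\alpha\)-invariant normal subgroup \(N\), shows that the induced automorphism of \(G/N\) is fixed-point free. A minimal counterexample is then a minimal normal subgroup of \(G \rtimes \grpgen{\alpha}\), so \(G = T_1 \times \cdots \times T_k\) with \(\alpha\) transitive on the factors. The element \(t\,\alpha(t)\cdots\alpha^{k-1}(t)\) correctly turns a fixed point of \(\beta = \restr{\alpha^{k}}{T_1}\) into one of \(\alpha\). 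The genuine gap is that the remaining claim, that no nonabelian finite simple group has a fixed-point free automorphism, \emph{is} the theorem, and your proposal only outlines it.

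Some of the outline can be made rigorous cheaply; other parts would not work as written.

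\textbf{Alternating and sporadic groups.} Let \(r\) be the order of \(\beta\Inn(T)\) in \(\mathrm{Out}(T)\), and let \(\beta^{r}\) be conjugation by \(x\). Since \(\beta\) commutes with \(\beta^{r}\), conjugation by \(\beta(x)\) equals conjugation by \(x\), so \(\beta(x) = x\) because \(Z(T) = 1\). Hence a fixed-point free \(\beta\) satisfies \(\beta^{r} = \Id\). If \(r = 2\), writing each \(g\) as \(\inv{y}\beta(y)\) shows that \(\beta\) inverts every element, so \(T\) would be abelian. If \(r = 1\), then \(\beta = \Id\). Since \(\mathrm{Out}(T)\) has exponent at most \(2\) for alternating and sporadic groups, this settles those families, replacing your ``one checks directly''. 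Thompson's theorem \cite{Thompson59} excludes every prime \(r\).

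\textbf{Groups of Lie type.} What is left is groups of Lie type with \(r\) composite, and there your sketch is inaccurate. The fixed points are \(C_T(\beta) = T \cap \mathbf{G}^{\sigma'}\) for an endomorphism \(\sigma'\) of the algebraic group inducing \(\beta\). This is a finite group of Lie type inside \(\mathbf{G}^{F}\) only if \(\sigma'\) can be chosen to be a Steinberg endomorphism with some power equal to \(F\). That has to be arranged, using \(\beta^{r} = \Id\) and a careful choice of lift; it cannot be assumed. When \(\beta\) has no field component, \(\sigma'\) is an algebraic automorphism of finite order, and Lang--Steinberg does not produce a finite group of Lie type at all. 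One must then show directly that \(C_{\mathbf{G}}(\sigma')^{F}\) meets \(T = O^{p'}(\mathbf{G}_{\mathrm{ad}}^{F})\) nontrivially. This is automatic for unipotent elements, but not for torus elements, whose rational points can be trivial (for example, a split torus over \(\F_2\)).

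\textbf{The shortcut.} Your shortcut is logically sound: by \cref{theo:AdoReeNagao} and \cref{prop:ReidemeisterNumber1}, any nontrivial \(\beta\)-invariant conjugacy class yields a nontrivial fixed point. However, the classes you propose are not \(\Aut(T)\)-invariant in general. Root elements of \(\mathrm{PSL}_2(q)\) and \(\mathrm{PSp}_{2n}(q)\) with \(q\) odd split into two classes interchanged by a diagonal automorphism. In \(\mathrm{Sp}_4(2^f)\), \(F_4(2^f)\) and \(G_2(3^f)\), the graph automorphism swaps long-root and short-root elements.

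So the invariant class, or the fixed element, has to be produced family by family. That case analysis is exactly what is missing.
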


For more information on groups with fixed-point free automorphisms, we refer the reader to \cite{GorensteinHerstein61,ShumyatskyTamarozzi02,Thompson59}.

The goal of this paper is to generalise \cref{theo:AdoReeNagao}.
Firstly, we generalise equality of (1) and (3) to bi-twisted conjugacy:
\begin{restatable*}{cor}{countingCharacters}	\label{cor:SumInproductFormulaReidemeisterNumber}
	Let \(G\) be a finite group and \(\phi, \psi \in \End(G)\). Then
	\[
		R(\phi, \psi) = \sum_{\chi \in \Irr(G)} \inprodarg{\chi \circ \phi}{\chi \circ \psi}.
	\]
\end{restatable*}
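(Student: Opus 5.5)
We count orbits of the action \(G \times G \to G\colon (g,h) \mapsto \phi(g) h \inv{\psi(g)}\) using Burnside's lemma. This gives
\[
	R(\phi, \psi) = \frac{1}{\card{G}} \sum_{g \in G} \card{\set{x \in G}{\phi(g) x \inv{\psi(g)} = x}} = \frac{1}{\card{G}} \sum_{g \in G} \card{\set{x \in G}{\inv{x}\phi(g) x = \psi(g)}}.
\]
The inner count is the number of \(x\) conjugating \(\phi(g)\) to \(\psi(g)\). It is nonzero precisely when \(\phi(g)\) and \(\psi(g)\) are conjugate, and in that case it equals \(\card{C_G(\psi(g))}\). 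Equivalently, it is the value at the pair \((\phi(g), \psi(g))\) of the function \(\delta(a,b) = \card{\set{x\in G}{\inv{x} a x = b}}\) on \(G \times G\).

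The key tool is the second orthogonality relation, which expresses this function in terms of characters:
\[
	\sum_{\chi \in \Irr(G)} \chi(a)\cconj{\chi(b)} = \begin{cases} \card{C_G(a)} & \text{if } a \conj b,\\ 0 & \text{otherwise.}\end{cases}
\]
Applying it with \(a = \phi(g)\) and \(b = \psi(g)\), the number of solutions \(x\) becomes \(\sum_{\chi} \chi(\phi(g))\cconj{\chi(\psi(g))}\). Substituting into Burnside's formula and interchanging the two finite sums yields
\[
	R(\phi,\psi) = \sum_{\chi \in \Irr(G)} \frac{1}{\card{G}} \sum_{g \in G} (\chi\circ\phi)(g)\cconj{(\chi\circ\psi)(g)} = \sum_{\chi\in\Irr(G)} \inprodarg{\chi\circ\phi}{\chi\circ\psi}.
\]
Here we use that \(\chi\circ\phi\) and \(\chi\circ\psi\) are characters of \(G\), possibly reducible, so the standard inner product applies to them.

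The main point needing care is the identification of the fixed-point set with the solution set of a conjugation equation. One must get the direction right: \(\phi(g)x\inv{\psi(g)} = x\) is equivalent to \(\inv{x}\phi(g)x = \psi(g)\). One must also check that the centraliser size is independent of which element of the pair is used, which holds because conjugate elements have centralisers of equal order. Beyond that, the argument is a routine interchange of sums. As a sanity check, setting \(\phi = \Id\) recovers the equality of (1) and (3) in \cref{theo:AdoReeNagao}, since \(\inprodarg{\chi}{\chi\circ\psi}\) is \(1\) or \(0\) according to whether \(\chi\circ\psi = \chi\).
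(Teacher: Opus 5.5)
Your proof is correct. It follows the same skeleton as the paper's: \(R(\phi,\psi) = \inprodarg{\theta_{\phi,\psi}}{1}\), combined with \(\theta_{\phi,\psi} = \sum_{\chi}(\chi\circ\phi)(\cconj{\chi\circ\psi})\) via column orthogonality. You obtain the first from Burnside's lemma where the paper computes \(\dim_{\C}\Hom_G(\C G,\C)\), and the second pointwise at each \(g\) where the paper matches inner products with each irreducible character, which is shorter; the paper's longer route additionally records the multiplicity formula of \cref{lem:ExpressionInproductBitwistedRepresentationIrreducibleCharacter}, which it reuses later.
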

Here, \(\Irr(G)\) denotes the set of irreducible complex characters of \(G\).

Secondly, we generalise the equality of (1) and (2) in two ways.
One generalisation is to bi-twisted conjugacy:
\begin{restatable*}{prop}{biTwistedConjugacyNumberInTermsOfConjugacyClasses}	\label{prop:biTwistedConjugacyNumberInTermsOfConjugacyClasses}
	Let \(G\) be a finite group and \(\phi, \psi \in \End(G)\).
	Then
	\[
		R(\phi, \psi) = \sum_{[g] \in \Conj(G)} \frac{\card{\left(\inv{\phi}([g]) \cap \inv{\psi}([g])\right)}}{\card{[g]}}.
	\]
\end{restatable*}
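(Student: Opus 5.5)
We will count orbits with Burnside's lemma applied to the action of \(G\) on itself given by \(g \cdot h = \phi(g) h \inv{\psi(g)}\). Burnside gives
\[
	R(\phi, \psi) = \frac{1}{\card{G}} \sum_{g \in G} \card{\set{h \in G}{\phi(g) h \inv{\psi(g)} = h}}.
\]
The fixed-point condition \(\phi(g) h \inv{\psi(g)} = h\) says exactly that \(\psi(g) = \inv{h} \phi(g) h\). For fixed \(g\), the set of such \(h\) is therefore empty unless \(\phi(g)\) and \(\psi(g)\) are conjugate in \(G\). If they are conjugate, this set is a coset of the centraliser \(C_G(\phi(g))\), so it has \(\card{C_G(\phi(g))} = \card{G}/\card{[\phi(g)]}\) elements.

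Substituting this into Burnside's formula, we obtain
\[
	R(\phi, \psi) = \sum_{\substack{g \in G \\ \phi(g) \conj \psi(g)}} \frac{1}{\card{[\phi(g)]}}.
\]
We then group the terms according to the common conjugacy class \([x] \in \Conj(G)\) containing both \(\phi(g)\) and \(\psi(g)\). An element \(g\) contributes to the class \([x]\) precisely when \(g \in \inv{\phi}([x]) \cap \inv{\psi}([x])\), and each such \(g\) contributes \(1/\card{[x]}\). Each \(g\) with \(\phi(g) \conj \psi(g)\) lies in exactly one such set, so we get
\[
	R(\phi, \psi) = \sum_{[x] \in \Conj(G)} \frac{\card{\left(\inv{\phi}([x]) \cap \inv{\psi}([x])\right)}}{\card{[x]}},
\]
which is the claimed formula.

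The only step requiring care is the count of \(h\) with \(\inv{h}\phi(g)h = \psi(g)\). If \(h_0\) is one solution, then the solutions are exactly the elements \(ch_0\) with \(c \in C_G(\phi(g))\). The size of the centraliser then follows from the orbit-stabiliser theorem applied to ordinary conjugation. Everything else is routine bookkeeping, so I do not expect a real obstacle. As a sanity check, taking \(\phi = \Id\) should recover the equality of (1) and (2) in \cref{theo:AdoReeNagao}.
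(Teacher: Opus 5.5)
Your proof is correct, but it takes a different and more elementary route than the paper. You apply Burnside's lemma directly to the \((\phi,\psi)\)-action. You count the fixed points of each \(g\) by hand (\(\card{C_G(\phi(g))}\) or \(0\)), which gives \(R(\phi,\psi)=\sum_{g \colon \phi(g)\conj\psi(g)} 1/\card{[\phi(g)]}\). The proposition is then a regrouping by the class of \(\phi(g)\).

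The paper instead derives the statement from \cref{theo:ReidemeisterNumberBiTwistedEqualsTraceBilinearForm}. There, \(\sum_{b}\inprodarg{\dual{\phi}(b)}{\dual{\psi}(b)}\) is the trace of a sesquilinear form on \(\Class(G)\), so it does not depend on the orthonormal basis. In the basis \(\Irr(G)\) it equals \(R(\phi,\psi)\) by \cref{cor:SumInproductFormulaReidemeisterNumber}, which itself rests on column orthogonality and on \(R(\phi,\psi)=\inprodarg{\theta_{\phi,\psi}}{1}\). Evaluating it in the basis of normalised class indicators \(\sqrt{\card{C_G(g)}}\,\Delta_{[g]}\), and using \(\Delta_{[g]}\circ\phi=\Delta_{\inv{\phi}([g])}\), gives the formula.

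Your argument is self-contained and avoids character theory entirely. Your intermediate formula is exactly \cref{lem:ExpressionInproductBitwistedRepresentationIrreducibleCharacter} with \(\chi\) trivial combined with \cref{prop:ReidemeisterNumberInproductBitwistedRepresentation1}, so the paper's own ingredients already allow this shortcut. Regrouping that same sum by the class of \(g\) rather than of \(\phi(g)\) also gives \cref{prop:ReidemeisterCoincidenceNumberEqualsConjugacySum} immediately.

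What the paper's route buys is conceptual. The character formula and the conjugacy-class formula appear as two evaluations of one basis-independent invariant. This explains why both generalisations of the Ado--Ree--Nagao equalities hold at once.
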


Here, \(\Conj(G)\) denotes the set of (ordinary) conjugacy classes of \(G\).
The other focuses on single twisted conjugacy, and consists of determining Reidemeister numbers by counting fixed points of actions on other twisted conjugacy classes:
\begin{restatable*}{theorem}{reidemeisterNumberEqualsNumberFixedConjugacyClasses}\label{theo:ReidemeisterNumberEqualsNumberFixedConjugacyClasses}
	Let \(G\) be a finite group and \(\phi, \psi \in \End(G)\).
Suppose that \(\phi \circ \psi = \psi \circ \phi\) and that \(g \Rconj{\psi} \phi(g)\) for all \(g \in G\).
Then the map
	\[
		\Xi\colon \Reid[\phi] \to \Reid[\phi]\colon [g]_{\phi} \mapsto [\psi(g)]_{\phi}
	\]
	is well defined and \(R(\psi) = \card{\Fix(\Xi)}\).
	
	In particular, the result holds for \(\phi = \Id\) and \(\phi = \psi^{k}\) for any \(k \geq 1\).
\end{restatable*}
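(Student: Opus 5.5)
The plan is to prove well-definedness directly, express both \(R(\psi)\) and \(\card{\Fix(\Xi)}\) as point counts divided by \(\card{G}\), and then compare those two counts using the hypothesis. The comparison is the step I have not carried through.

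\emph{Well-definedness.} Suppose \(h = z g \inv{\phi(z)}\). Since \(\phi\circ\psi = \psi\circ\phi\), we get
\[
	\psi(h) = \psi(z)\,\psi(g)\,\inv{\phi(\psi(z))},
\]
so \(\psi(h) \Rconj{\phi} \psi(g)\) and \(\Xi\) is well defined.

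\emph{The two counts.} A class \([g]_{\phi}\) is fixed by \(\Xi\) exactly when \(\psi(g) \Rconj{\phi} g\). Consider
\[
	B \coloneq \set{(g,x) \in G\times G}{x g \inv{\phi(x)} = \psi(g)}.
\]
For fixed \(g\), the number of admissible \(x\) is either \(0\) or \(\card{\Stab_{\phi}(g)}\), and it is nonzero exactly when \([g]_{\phi}\in\Fix(\Xi)\). Summing over \(g\) and using the orbit--stabiliser theorem as in the proof of \cref{prop:ReidemeisterNumber1}, each fixed class contributes \(\card{[g]_\phi}\cdot\card{\Stab_\phi(g)} = \card{G}\). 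Hence
\[
	\card{G}\cdot\card{\Fix(\Xi)} = \card{B}.
\]
In the same way (Burnside's lemma for the \(\psi\)-twisted action) we have \(\card{G}\cdot R(\psi) = \card{A}\), where
\[
	A \coloneq \set{(g,x)}{x g \inv{\psi(x)} = g} = \set{(g,x)}{\inv{g} x g = \psi(x)}.
\]
Counting \(A\) by \(x\) gives \(\card{A} = \sum_{x} \card{C_G(x)}\cdot[\psi(x)\conj x]\).

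\emph{Comparing the counts.} It therefore suffices to show \(\card{A} = \card{B}\). This is the main obstacle, and it is the only place where the hypothesis \(g \Rconj{\psi} \phi(g)\) must enter.

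\begin{itemize}
	\item \textbf{First attempt: swap the roles of the two coordinates of \(B\).} Rewrite the defining equation of \(B\) as \(x = \psi(g)\,\phi(x)\,\inv{g}\). For fixed \(x\), try to show that the number of \(g\) satisfying it equals \(\card{C_G(x)}\) when \(\psi(x)\conj x\) and \(0\) otherwise. The hypothesis supplies, for each \(x\), some \(y\) with \(\phi(x) = y\,x\,\inv{\psi(y)}\). Substituting turns the condition into a \(\psi\)-twisted equation in \(g\) and \(y\), and I would try to build an explicit bijection \(A \to B\) of the form \((g,x)\mapsto(g',x')\) from this substitution.
	\item \textbf{Fallback: character theory.} By \cref{theo:AdoReeNagao}, \(R(\psi)\) counts the \(\psi\)-invariant irreducible characters. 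The \(\phi\)-classes can likewise be parametrised through \(\Irr(G)\) via \cref{cor:SumInproductFormulaReidemeisterNumber}. I would then try to identify \(\card{\Fix(\Xi)}\) with the trace of the permutation induced by \(\psi\) on a suitable basis of \(\phi\)-class functions. The hypothesis says that \(\phi\) acts trivially on \(\Reid[\psi]\), which should make this trace equal the number of \(\psi\)-invariant characters.
\end{itemize}

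\emph{The particular cases.} For \(\phi = \Id\) the condition \(g \Rconj{\psi} \phi(g)\) is trivial. For \(\phi = \psi^{k}\), note first that \(g \Rconj{\psi} \psi(g)\), since
\[
	\inv{g}\, g\, \psi(g) = \psi(g)
\]
exhibits \(\psi(g) = z g \inv{\psi(z)}\) with \(z = \inv{g}\). Induction on \(k\) together with transitivity of \(\Rconj{\psi}\) then gives \(g \Rconj{\psi} \psi^{k}(g)\). Commutativity of \(\psi\) and \(\psi^{k}\) is clear.
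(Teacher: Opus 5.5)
\textbf{The proposal has a genuine gap.} Your well-definedness argument is correct, and so are the identity \(\card{G}\cdot\card{\Fix(\Xi)} = \card{B}\) and the special cases \(\phi = \Id\) and \(\phi = \psi^{k}\). But you stop exactly where the content of the theorem lies: \(\card{B} = \card{G}\cdot R(\psi)\) is never established, so this is not yet a proof.

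Your first attempt also aims at a false target. You want the fibre of \(B\) over a fixed \(x\) to have \(\card{C_G(x)}\) elements when \(\psi(x)\conj x\) and \(0\) otherwise. This already fails for \(\phi = \Id\) and \(\psi\) the inversion on a cyclic group of order \(3\). There \(xg\inv{x} = \inv{g}\) forces \(g = 1\), so every fibre over \(x\) has exactly one element. Your target instead predicts \(3\) over \(x = 1\) and \(0\) elsewhere. The totals agree, but the fibrewise statement does not. The character-theoretic fallback is too vague to assess: you do not specify the basis of \(\phi\)-class functions, nor how the hypothesis would turn that trace into the number of \(\psi\)-invariant characters.

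\textbf{The missing idea is simpler: drop \(A\) and count the same set \(B\) by its second coordinate.} The defining equation \(xg\inv{\phi(x)} = \psi(g)\) is equivalent to
\[
	\inv{x} = g\,\phi(\inv{x})\,\inv{\psi(g)},
\]
so the fibre over \(x\) is the set of \(g\) carrying \(\phi(\inv{x})\) to \(\inv{x}\) under the \(\psi\)-twisted action. This is where the hypothesis enters. Since \(\inv{x} \Rconj{\psi} \phi(\inv{x})\), the fibre is non-empty. It is therefore a coset of a stabiliser and has exactly \(\card{\Stab_{\psi}(\inv{x})}\) elements. Summing over \(x\) and applying the orbit--stabiliser theorem gives \(\card{B} = \card{G}\sum_{u \in G} 1/\card{[u]_{\psi}} = \card{G}\cdot R(\psi)\). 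Combined with your count of \(B\) over \(g\), this completes the proof.

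This double count of a single solution set is precisely the paper's Nagao-style argument. The paper's set \(\set{(g,x)}{x = g\phi(x)\inv{\psi(g)}}\) is your \(B\) after the substitution \(x \mapsto \inv{x}\).
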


Finally, we prove a sharp upper bound on the highest non-trivial number of bi-twisted conjugacy classes in a finite non-abelian group:
\begin{restatable*}{prop}{upperboundBiTwistedConjugacyNumbers}
\label{prop:upperboundBiTwistedConjugacyNumbers}
    Let \(G\) be a non-abelian finite group and \(\phi,\psi \in \End(G)\). Let \(p\) be the smallest prime divisor of \(\card{G}\), and let \(q\) be either the second smallest prime divisor (if \(p^3 \nmid \card{G}\)) or equal to \(p\) (if \(p^3 \mid \card{G}\)). The following are equivalent:
    \begin{enumerate}[(1)]
    	\item \(R(\phi,\psi) > \frac{p+q-1}{pq} \card{G}\);
    	\item \(\phi = \psi\) and \(\im(\phi) \leq Z(G)\);
    	\item \(R(\phi,\psi) = \card{G}\).
    \end{enumerate}
\end{restatable*}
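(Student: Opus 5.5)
The plan is to count orbits of the action \((g,h)\mapsto \phi(g)h\inv{\psi(g)}\) with Burnside's lemma, bound each term, and then handle the case where the crude bound falls just short.

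\emph{The easy implications.} For (2)\(\Rightarrow\)(3): if \(\phi=\psi\) and \(\im(\phi)\le Z(G)\), then \(\phi(z)x\inv{\phi(z)}=x\) for all \(x,z\). So every \((\phi,\psi)\)-class is a singleton and \(R(\phi,\psi)=\card{G}\). For (3)\(\Rightarrow\)(1): since \(p\le q\), we have \(p+q-1<pq\), so \(\card{G}>\frac{p+q-1}{pq}\card{G}\).

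\emph{Setting up (1)\(\Rightarrow\)(2).} Burnside's lemma gives
\[
R(\phi,\psi)=\frac{1}{\card{G}}\sum_{z\in G}\card{\set{x\in G}{\inv{x}\phi(z)x=\psi(z)}}.
\]
For fixed \(z\), the set in the sum is empty unless \(\phi(z)\) and \(\psi(z)\) are conjugate. If they are, it is a coset of \(C_G(\phi(z))\).

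I introduce two subgroups. Let \(M\coloneq\inv{\phi}(Z(G))\), a subgroup. Let \(N\coloneq\set{z\in M}{\phi(z)=\psi(z)}\); this is a subgroup because the equaliser of two homomorphisms is a subgroup. The terms are then bounded as follows:
\begin{itemize}
\item for \(z\in N\) the term equals \(\card{G}\);
\item for \(z\in M\setminus N\) the element \(\phi(z)\) is central and differs from \(\psi(z)\), so the two are not conjugate and the term is \(0\);
\item for \(z\notin M\) the term is at most \(\card{C_G(\phi(z))}\le\card{G}/p\), since \(\phi(z)\) is non-central.
\end{itemize}
Hence
\[
R(\phi,\psi)\le \card{N}+\frac{\card{G}-\card{M}}{p}.
\]

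\emph{Case analysis.} If \(N=G\), then (2) holds.

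If \(M=G\) but \(N\neq G\), the last term vanishes. Then \(R\le\card{N}\le\card{G}/p\), which is not larger than the bound in (1).

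Otherwise \(M\neq G\), so \(\card{M}\le\card{G}/p\) and \(N\le M\).
\begin{itemize}
\item If \(\card{G:N}\ge q\), then \(R\le \card{G}\bigl(\tfrac1q+\tfrac1p(1-\tfrac1q)\bigr)=\frac{p+q-1}{pq}\card{G}\), contradicting (1).
\item If \(\card{G:N}=p\), then \(N=M\) and the bound reads \(\frac{2p-1}{p^2}\card{G}\). When \(q=p\) this is exactly the threshold, which finishes that case.
\end{itemize}

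\emph{Main obstacle.} The remaining case is \(q>p\), where \(p^3\nmid\card{G}\), \(\card{G:M}=p\), and \(\phi(G)Z(G)/Z(G)\) is nontrivial. Here I need the sharper estimate
\[
\frac1{\card{G}}\sum_{z\notin M}\card{C_G(\phi(z))}\le\Bigl(1-\frac1p\Bigr)\frac{\card{G}}{q}.
\]
It would suffice that every non-central \(g\) in \(\phi(G)\) has \(\card{G:C_G(g)}\ge q\), or at least that this holds on average.

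What I would try first is the following. Suppose \(\card{G:C_G(g)}=p\) for a non-central \(g\in\phi(G)\). Then \(C_G(g)\) is normal of index \(p\) and contains \(Z(G)\langle g\rangle\). Now \(G/Z(G)\) is non-cyclic, so its order is divisible by \(p^2\) or by \(pq\). With \(p^3\nmid\card{G}\), I would derive a contradiction by examining a Sylow \(p\)-subgroup together with \(\phi(G)Z(G)\), whose image in \(G/Z(G)\) has order \(p\). Failing that, I would bound the average above by a commuting-probability-type count over the coset decomposition of \(G\) modulo \(M\). This step is where I expect the real work to lie.
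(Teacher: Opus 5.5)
\paragraph{Verdict: the proof is incomplete.} Your Burnside count, the subgroups \(M=\inv{\phi}(Z(G))\) and \(N\), the estimate \(R(\phi,\psi)\le\card{N}+(\card{G}-\card{M})/p\), the easy implications, and the cases \(N=G\), \(M=G\neq N\) and \([G:N]\ge q\) are all correct.

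\paragraph{Where it breaks.} There are two problems.
\begin{enumerate}[(a)]
\item The argument stops exactly where \(p^3\nmid\card{G}\) and the choice of \(q\) must be used. For \(q>p\) you only describe what you would try, so (1)\(\Rightarrow\)(2) is not established.
\item Your case split is incomplete. When \(q>p\), every prime divisor of \(\card{G}\) other than \(p\) is at least \(q\). So the values of \([G:N]\) with \(1<[G:N]<q\) are \(p\) and, if \(p^2\mid\card{G}\) and \(p^2<q\), also \(p^2\). You never treat \([G:N]=p^2\), and the crude bound genuinely fails there. Take the Frobenius group \(G=C_5\rtimes C_4\), so \(p=2\), \(q=5\) and the threshold is \(12\). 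Let \(\phi=\psi\) be the projection onto a complement \(C_4\). Then \(Z(G)=1\) and \(M=N=C_5\), and your bound gives \(5+15/2=12.5>12\).
\end{enumerate}

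\paragraph{The missing idea.} One argument handles both cases. If \([G:N]<q\), then \([G:M]\) divides \([G:N]\in\{p,p^2\}\), so \(G/M\cong\phi(G)Z(G)/Z(G)\) is a \(p\)-group.
\begin{enumerate}[(i)]
\item For \(h=\phi(z)\), the \(p'\)-part \(h_{p'}\) is a power of \(h\). Its image in this \(p\)-group has order prime to \(p\), so \(h_{p'}\in Z(G)\) and \(C_G(h)=C_G(h_p)\).
\item The element \(h_p\) lies in a Sylow \(p\)-subgroup \(P\) of order at most \(p^2\). Such a \(P\) is abelian, so \(P\le C_G(h)\) and \(p\nmid[G:C_G(h)]\).
\item For \(z\notin M\) this index exceeds \(1\), hence is at least \(q\).
\end{enumerate}
Using \(\card{N}\le\card{M}\) and \(\card{N}\le\card{G}/p\), this gives
\[
R(\phi,\psi)\le\card{N}+\frac{\card{G}-\card{M}}{q}\le\Bigl(1-\frac{1}{q}\Bigr)\card{N}+\frac{\card{G}}{q}\le\frac{p+q-1}{pq}\card{G},
\]
contradicting (1). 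This is the same mechanism as case (2) of the paper's lemma, where \(G_p\le C_G(H_p)\) is used.

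\paragraph{Comparison with the paper.} With this step added, your route is a valid alternative. The paper first uses the conjugacy-class formula to get \(R(\phi,\psi)\le R(\phi,\phi)=k_{\im(\phi)}(G)\). It then proves a bound on \(k_H(G)\) for every non-central subgroup \(H\), deduces that both images are central, and finishes with \(R(\phi,\psi)=[G:\im(\delta)]\) for \(\delta(g)=\phi(g)\inv{\psi(g)}\). Your single count with \(M\) and \(N\) obtains centrality and \(\phi=\psi\) at the same time.
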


\section{Counting complex characters}
\subsection{Formulae}

Since we can view bi-twisted conjugacy as a group action, this yields, for every \(g \in G\), the map \(T_{\phi, \psi}(g)\colon G \to G\colon h \mapsto \phi(g) h \inv{\psi(g)}\), which is a permutation of \(G\).
Hence, we get a permutation representation \(T_{\phi, \psi}\) of \(G\), which we call the \emph{bi-twisted conjugation representation}.
If \(\phi = \Id\), we also simply write \(T_{\psi}\).
For \(\phi = \psi = \Id\), we obtain the classical conjugation representation, which has been intensively studied in the literature \cite{Formanek71,Frame47,Roth71,Solomon61}.

We generalise some of the results by R.\ Roth \cite{Roth71} to the bi-twisted conjugation representation and use these to determine an alternative way to express \(R(\phi, \psi)\) in terms of the irreducible characters of \(G\) (\cref{cor:SumInproductFormulaReidemeisterNumber}).

\begin{lemma}	\label{lem:ExpressionInproductBitwistedRepresentationIrreducibleCharacter}
	Let \(G\) be a finite group, \(\phi, \psi \in \End(G)\) and let \(\theta_{\phi, \psi}\) be the character of the \((\phi,\psi)\)-conjugacy representation. For every \(\chi \in \Irr(G)\), we have
	\[
		\inprodarg{\theta_{\phi, \psi}}{\chi} = \sum_{\substack{g \in G \\ [\phi(g)] = [\psi(g)]}} \frac{1}{\card{[\phi(g)]}} \cconj{\chi}(g).
	\]
\end{lemma}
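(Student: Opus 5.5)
Since \(T_{\phi,\psi}\) is a permutation representation, its character is the number of fixed points: \(\theta_{\phi,\psi}(g) = \card{\set{h \in G}{\phi(g) h \inv{\psi(g)} = h}}\). I will first compute this number, then expand the inner product \(\inprodarg{\theta_{\phi,\psi}}{\chi} = \frac{1}{\card{G}} \sum_{g \in G} \theta_{\phi,\psi}(g)\cconj{\chi}(g)\).

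The condition \(\phi(g) h \inv{\psi(g)} = h\) is equivalent to \(\inv{h}\phi(g) h = \psi(g)\). So the fixed points of \(T_{\phi,\psi}(g)\) are exactly the elements \(h\) conjugating \(\phi(g)\) to \(\psi(g)\).
\begin{itemize}
\item If \(\phi(g)\) and \(\psi(g)\) are not conjugate, i.e.\ \([\phi(g)] \neq [\psi(g)]\), this set is empty.
\item Otherwise it is a coset \(C_G(\phi(g)) h_0\) of the centraliser. It therefore has cardinality \(\card{C_G(\phi(g))} = \card{G}/\card{[\phi(g)]}\) by orbit--stabiliser for the conjugation action.
\end{itemize}
Hence
\[
\theta_{\phi,\psi}(g) = \begin{cases} \card{G}/\card{[\phi(g)]} & \text{if } [\phi(g)] = [\psi(g)],\\ 0 & \text{otherwise.}\end{cases}
\]

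Substituting into the inner product, the factor \(\card{G}\) cancels with \(\frac{1}{\card{G}}\). The sum restricts to those \(g\) with \([\phi(g)] = [\psi(g)]\), giving exactly the stated formula.

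The only step needing care is identifying the fixed-point set as a coset of \(C_G(\phi(g))\). Its size is the same whether we use \(\phi(g)\) or \(\psi(g)\), since the two are conjugate and so have centralisers of equal order. Beyond this, the argument is routine and has no real obstacle.
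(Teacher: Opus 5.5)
Your proposal is correct and follows the paper's proof essentially step for step. Like the paper, you write $\theta_{\phi,\psi}(g)$ as the number of $h$ conjugating $\phi(g)$ to $\psi(g)$, obtain $\card{C_G(\phi(g))}$ or $0$, and substitute into the inner product. Your explicit identification of that set as a coset $C_G(\phi(g))h_0$ is the detail the paper leaves implicit.
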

\begin{proof}
	Let \(g \in G\) be arbitrary. Since \(T_{\phi, \psi}\) is a permutation representation, \(\theta_{\phi, \psi}(g) = \card{\Fix(T_{\phi, \psi}(g))}\). We further rewrite this:
	\begin{align*}
		\theta_{\phi, \psi}(g)	&=	\card{\Fix(T_{\phi, \psi}(g))}	\\
								&=	\card{\set{x \in G}{\phi(g) x \inv{\psi(g)} = x}}	\\
								&=	\card{\set{x \in G}{\phi(g)^{x} = \psi(g)}}.
	\end{align*}
	If \(\phi(g)\) and \(\psi(g)\) are conjugate, this equals \(\card{C_{G}(\phi(g))}\); otherwise, this equals \(0\). Therefore, for \(\chi \in \Irr(G)\) arbitrary, we find
	\begin{align*}
		\inprodarg{\theta_{\phi, \psi}}{\chi}	&=	\frac{1}{\card{G}} \sum_{g \in G} \theta_{\phi, \psi}(g) \cconj{\chi}(g)	\\
												&=	\sum_{\substack{g \in G \\ [\phi(g)] = [\psi(g)]}} \frac{\card{C_{G}(\phi(g))}}{\card{G}} \cconj{\chi}(g)	\\
												&=	\sum_{\substack{g \in G \\ [\phi(g)] = [\psi(g)]}} \frac{1}{\card{[\phi(g)]}} \cconj{\chi}(g),
	\end{align*}
	which finishes the proof.
\end{proof}

\begin{theorem}	\label{theo:SumFormulaCharacterBitwistedRepresentation}
	Let \(G\) be a finite group, \(\phi, \psi \in \End(G)\) and let \(\theta_{\phi, \psi}\) be the character of the \((\phi,\psi)\)-conjugacy representation. Then
	\begin{equation}	\label{eq:SumFormulaCharacterBitwistedRepresentation}
		\theta_{\phi, \psi} = \sum_{\chi \in \Irr(G)} (\chi \circ \phi)(\cconj{\chi \circ \psi}).
	\end{equation}
\end{theorem}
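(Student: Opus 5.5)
Since both sides of \eqref{eq:SumFormulaCharacterBitwistedRepresentation} are class functions on \(G\), the plan is to check the identity pointwise at each \(g \in G\). The left-hand side was computed in the proof of \cref{lem:ExpressionInproductBitwistedRepresentationIrreducibleCharacter}:
\[
	\theta_{\phi, \psi}(g) = \card{\set{x \in G}{\phi(g)^{x} = \psi(g)}},
\]
which equals \(\card{C_{G}(\phi(g))}\) if \(\phi(g)\) and \(\psi(g)\) are conjugate, and \(0\) otherwise. For the right-hand side, I will apply the second orthogonality relation (column orthogonality) to the two elements \(a = \phi(g)\) and \(b = \psi(g)\) of \(G\):
\[
	\sum_{\chi \in \Irr(G)} \chi(a)\cconj{\chi(b)} =
	\begin{cases}
		\card{C_{G}(a)} & \text{if } a \text{ and } b \text{ are conjugate},\\
		0 & \text{otherwise.}
	\end{cases}
\]
Evaluating the right-hand side of \eqref{eq:SumFormulaCharacterBitwistedRepresentation} at \(g\) gives exactly \(\sum_{\chi} \chi(\phi(g))\cconj{\chi(\psi(g))}\). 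This is therefore equal to \(\theta_{\phi,\psi}(g)\) in both cases, and the two case distinctions match.

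The only point needing care is that \(\chi \circ \phi\) and \(\chi \circ \psi\) are generally not irreducible characters of \(G\); they are merely characters of the representations \(\rho \circ \phi\) and \(\rho \circ \psi\). This causes no problem, however. The orthogonality relation is applied in \(G\) itself, to the fixed elements \(\phi(g)\) and \(\psi(g)\), with \(\chi\) ranging over \(\Irr(G)\). No irreducibility of the composed characters is needed, so I do not expect any real obstacle; the argument is a direct pointwise comparison.

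As a consistency check, and a hint at how the corollary will follow, I would also note that taking the inner product with the trivial character gives \(R(\phi,\psi) = \inprodarg{\theta_{\phi,\psi}}{1}\), the number of orbits. The right-hand side then yields \(\sum_{\chi}\inprodarg{\chi\circ\phi}{\chi\circ\psi}\), which is consistent with \cref{cor:SumInproductFormulaReidemeisterNumber}.
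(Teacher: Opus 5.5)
Your proof is correct and uses the same two ingredients as the paper's: the formula \(\theta_{\phi,\psi}(g) = \card{C_G(\phi(g))}\) or \(0\), according to whether \(\phi(g)\) and \(\psi(g)\) are conjugate, and column orthogonality applied to the pair \((\phi(g),\psi(g))\). The difference is only in how the argument is organised. The paper computes \(\inprodarg{\omega}{\chi_j}\) for each \(\chi_j \in \Irr(G)\) and applies column orthogonality inside that sum. It then matches the result with \cref{lem:ExpressionInproductBitwistedRepresentationIrreducibleCharacter} and concludes \(\omega = \theta_{\phi,\psi}\) because \(\Irr(G)\) is a basis of the class functions. Your direct pointwise comparison makes that detour unnecessary, and with it the need to observe that both sides are class functions.
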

\begin{proof}
	Let \(\omega\) denote the right-hand side of \eqref{eq:SumFormulaCharacterBitwistedRepresentation}. Let \(\Irr(G) = \{\chi_{1}, \ldots, \chi_{c}\}\), with \(c\) the number of irreducible representations of \(G\). Let \(j \in \{1, \ldots, c\}\) be arbitrary. Then
	\begin{align*}
		\inprodarg{\omega}{\chi_{j}}	&=	\frac{1}{\card{G}} \sum_{g \in G} \sum_{i = 1}^{c} (\chi_{i} \circ \phi)(g) (\cconj{\chi_{i} \circ \psi})(g) \cconj{\chi_{j}}(g)	\\
										&=	\frac{1}{\card{G}} \sum_{g \in G} \sum_{i = 1}^{c} \chi_{i}(\phi(g)) \cconj{\chi_{i}}(\psi(g))\cconj{\chi_{j}}(g)	\\
										&=	\frac{1}{\card{G}} \sum_{g \in G} \cconj{\chi_{j}}(g) \sum_{i = 1}^{c} \chi_{i}(\phi(g)) \cconj{\chi_{i}}(\psi(g)).
	\end{align*}
	For \(x, y \in G\), the sum \(\sum_{i = 1}^{c} \chi_{i}(x) \cconj{\chi_{i}}(y)\) is the inner product of two columns of the character table of \(G\), namely the columns of the conjugacy classes of \(x\) and \(y\). It is well known (\cite[Chapter~19, Theorem~2.3(iii)]{Karpilovsky92}) that this is equal to \(\card{C_{G}(x)}\) if they are conjugate, and equal to \(0\) if they are not. Hence, we obtain
	\begin{align*}
		\inprodarg{\omega}{\chi_{j}}	&=	\frac{1}{\card{G}} \sum_{\substack{g \in G \\ [\phi(g)] = [\psi(g)]}} \cconj{\chi_{j}}(g) \card{C_{G}(\phi(g))}	\\
										&=	\sum_{\substack{g \in G \\ [\phi(g)] = [\psi(g)]}} \frac{1}{\card{[\phi(g)]}} \cconj{\chi_{j}}(g).
	\end{align*}
	By \cref{lem:ExpressionInproductBitwistedRepresentationIrreducibleCharacter}, this is equal to \(\inprodarg{\theta_{\phi, \psi}}{\chi_{j}}\). As \(j \in \{1, \ldots, c\}\) was arbitrary, it follows that \(\omega = \theta_{\phi, \psi}\).
\end{proof}

\begin{prop}	\label{prop:ReidemeisterNumberInproductBitwistedRepresentation1}
	Let \(G\) be a finite group, \(\phi, \psi \in \End(G)\) and let \(\theta_{\phi, \psi}\) be the character of the \((\phi,\psi)\)-conjugacy representation. Let \(1\) denote the trivial representation of \(G\). Then \(R(\phi, \psi) = \inprodarg{\theta_{\phi, \psi}}{1}\).
\end{prop}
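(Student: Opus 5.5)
The plan is to use that \(\theta_{\phi, \psi}\) is the character of a permutation representation, so its inner product with the trivial character counts orbits of the underlying action. Here the action is the bi-twisted action \(g \cdot h = \phi(g) h \inv{\psi(g)}\) of \(G\) on itself, and its orbits are exactly the \((\phi, \psi)\)-conjugacy classes. The quantity to be shown equal to \(\inprodarg{\theta_{\phi, \psi}}{1}\) is therefore the number of orbits, \(R(\phi, \psi)\).

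First, since the trivial character is real-valued and identically \(1\), I expand
\[
	\inprodarg{\theta_{\phi, \psi}}{1} = \frac{1}{\card{G}} \sum_{g \in G} \theta_{\phi, \psi}(g) = \frac{1}{\card{G}} \sum_{g \in G} \card{\Fix(T_{\phi, \psi}(g))},
\]
using the observation from the proof of \cref{lem:ExpressionInproductBitwistedRepresentationIrreducibleCharacter} that \(\theta_{\phi, \psi}(g) = \card{\Fix(T_{\phi, \psi}(g))}\).

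Next, I apply Burnside's lemma (the Cauchy--Frobenius lemma) to the action of \(G\) on \(G\) given by \(g \mapsto T_{\phi, \psi}(g)\). It states that the average number of fixed points equals the number of orbits.

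To make the proof self-contained, I can instead double count the set \(\set{(g, x) \in G \times G}{\phi(g) x \inv{\psi(g)} = x}\). Summing over \(g\) gives \(\sum_{g} \card{\Fix(T_{\phi, \psi}(g))}\). Summing over \(x\) gives \(\sum_{x} \card{\Stab_{\phi, \psi}(x)}\). By the orbit-stabiliser theorem, \(\card{\Stab_{\phi, \psi}(x)} = \card{G}/\card{[x]_{\phi, \psi}}\). Grouping the terms by orbit, each orbit contributes exactly \(\card{G}\), so the total is \(\card{G} \cdot R(\phi, \psi)\).

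The only point needing care is that this really is a group action. This holds because \(\phi\) and \(\psi\) are homomorphisms:
\[
	\phi(g_1 g_2) x \inv{\psi(g_1 g_2)} = \phi(g_1)\bigl(\phi(g_2) x \inv{\psi(g_2)}\bigr)\inv{\psi(g_1)}.
\]
Beyond this check there is no real obstacle, and the proof is a direct application of Burnside's lemma.
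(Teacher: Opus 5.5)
Your argument is correct, but it takes a different route from the paper. The paper invokes the general fact \(\inprodarg{\chi_\rho}{\chi_\sigma} = \dim_{\C}\Hom_G(V,W)\). It then identifies \(\Hom_G(\C G,\C)\) with the linear functionals that are constant on \((\phi,\psi)\)-conjugacy classes, and exhibits the indicator functionals \(\Delta_{[g]_{\phi,\psi}}\) as an explicit basis. You instead compute the inner product from its definition, use that a permutation character counts fixed points, and double count the pairs \((g,x)\) with \(\phi(g)x\inv{\psi(g)} = x\) via orbit--stabiliser; in effect you prove Burnside's lemma for this action.

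Your version is more elementary and self-contained: it needs neither the cited lemma from Serre nor any linear algebra beyond the trace of a permutation matrix. It is also the same double count the paper itself uses, following Nagao, in the proof of \cref{theo:ReidemeisterNumberEqualsNumberFixedConjugacyClasses}. Moreover, insert into your average \(\frac{1}{\card{G}}\sum_{g}\theta_{\phi,\psi}(g)\) the value of \(\theta_{\phi,\psi}(g)\) computed in \cref{lem:ExpressionInproductBitwistedRepresentationIrreducibleCharacter}, then group the elements \(g\) by conjugacy class. This directly gives the class-sum formula of \cref{prop:ReidemeisterCoincidenceNumberEqualsConjugacySum}, without the orthonormal-basis trace argument.

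The paper's version, in exchange, is more structural. It identifies the space \(\Hom_G(\C G,\C)\) of invariant functionals itself, with an explicit basis indexed by \(\Reid[\phi,\psi]\), rather than only its dimension.
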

\begin{proof}
	Given a finite group \(F\) and two (complex) representations \(\rho\colon F \to \GL(V)\) and \(\sigma\colon F \to \GL(W)\), it is well known that \(\inprodarg{\chi_{\rho}}{\chi_{\sigma}}\) equals the dimension of \(\Hom_{F}(V, W)\), where \(\chi_{\rho}\) and \(\chi_{\sigma}\) are the characters of the respective representations (\cite[\S7.2, Lemma~2]{Serre77}). Applied to \(T_{\phi, \psi}\) and \(1\), we have to compute
	\[
		\dim_{\C}(\Hom_{G}(\C G, \C)),
	\]
	where \(\C G\) is the group algebra associated to \(G\).
	
	Let \(f\colon \C G \to \C\) be a \(G\)-invariant linear map, \ie \(f(g \cdot v) = g \cdot f(v)\) for all \(g \in G\) and \(v \in \C G\). By definition of both representations, this is equivalent to \(f\) being constant on \((\phi, \psi)\)-conjugacy classes. For every \(g \in G\), let \(\Delta_{[g]_{\phi, \psi}}\colon \C G \to \C\) be defined by
	\[
		\Delta_{[g]_{\phi, \psi}}(h) = \begin{cases}
 					1	&	\mbox{if } g \Rconj{\phi, \psi} h,	\\
 					0	&	\mbox{otherwise,}
 				\end{cases}
	\]
	for every \(h \in G\) and then extended linearly to the whole of \(\C G\).
	By construction, \(\Delta_{[g]_{\phi, \psi}}\) is \(G\)-invariant. Clearly, the set \(\B \coloneq \set{\Delta_{[g]_{\phi, \psi}}}{[g]_{\phi, \psi} \in \Reid[\phi, \psi]}\) is linearly independent. Conversely, given a \(G\)-invariant linear map \(f \colon \C G \to \C\), it is readily verified that
	\[
		f = \sum_{[g]_{\phi, \psi} \in \Reid[\phi, \psi]} f(g) \Delta_{[g]_{\phi, \psi}}.
	\]
	Hence, \(\B\) is also a generating set. Therefore,
	\[
		\inprodarg{\theta_{\phi, \psi}}{1} = \dim_{\C}(\Hom_{G}(\C G, \C)) = \card{\B} = R(\phi, \psi). \qedhere
	\]
\end{proof}

With this, we can generalise equality of (1) and (3) in \cref{theo:AdoReeNagao} to bi-twisted conjugacy.

\countingCharacters
\begin{proof}
	By \cref{prop:ReidemeisterNumberInproductBitwistedRepresentation1}, \(R(\phi, \psi) = \inprodarg{\theta_{\phi, \psi}}{1}\). Using \cref{theo:SumFormulaCharacterBitwistedRepresentation}, we obtain
	\begin{align*}
		R(\phi, \psi)	&=	\inprodarg{\theta_{\phi, \psi}}{1}	\\
						&=	\sum_{\chi \in \Irr(G)} \inprodarg{(\chi \circ \phi)(\cconj{\chi \circ \psi})}{1}	\\
						&=	\sum_{\chi \in \Irr(G)} \inprodarg{\chi \circ \phi}{\chi \circ \psi}	. \qedhere
	\end{align*}
\end{proof}
For \(\phi = \Id\), we get
\[
	R(\psi) = \sum_{\chi \in \Irr(G)} \inprodarg{\chi}{\chi \circ \psi},
\]
which is the equality of (1) and (3) in \cref{theo:AdoReeNagao}.
Indeed, for \(\chi \in \Irr(G)\), the inner product \(\inprodarg{\chi}{\chi \circ \psi}\) is either \(0\) or \(1\), since both characters have the same degree and \(\chi\) is irreducible.
Consequently, \(\inprodarg{\chi}{\chi \circ \psi} = 1\) if and only if they are equal.
The summation above therefore counts the number of \(\psi\)-invariant irreducible complex characters.

The set of irreducible complex characters of a finite group \(G\) is an orthonormal basis of \(\Class(G)\), the vector space of class functions on \(G\).
Using \cref{cor:SumInproductFormulaReidemeisterNumber}, we show that a similar sum formula for \(R(\phi, \psi)\) holds if we replace \(\Irr(G)\) by any orthonormal basis of \(\Class(G)\).

For an endomorphism \(\phi \in \End(G)\), we let \(\dual{\phi} \colon \Class(G) \to \Class(G) \colon f \mapsto f \circ \phi\) denote the induced map on \(\Class(G)\).

\begin{theorem}	\label{theo:ReidemeisterNumberBiTwistedEqualsTraceBilinearForm}
	Let \(G\) be a finite group and \(\phi, \psi \in \End(G)\).
	Let \(\B\) be an orthonormal basis of \(\Class(G)\) (with respect to the standard inner product).
	Then
	\[
		R(\phi, \psi) = \sum_{b \in \B} \inprodarg{\dual{\phi}(b)}{\dual{\psi}(b)}.
	\]
\end{theorem}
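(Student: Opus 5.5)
The plan is to reduce the statement to \cref{cor:SumInproductFormulaReidemeisterNumber} by showing that the expression
\[
	S(\B) \coloneq \sum_{b \in \B} \inprodarg{\dual{\phi}(b)}{\dual{\psi}(b)}
\]
does not depend on the choice of orthonormal basis \(\B\) of \(\Class(G)\). Since \(\Irr(G)\) is one such basis, and \(S(\Irr(G)) = R(\phi, \psi)\) by that corollary, the theorem then follows at once.

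To establish basis independence, I will view \(S(\B)\) as a trace. First note that \(\dual{\phi}\) and \(\dual{\psi}\) really map \(\Class(G)\) into itself. If \(f\) is a class function, then \(f(\phi(x^{y})) = f(\phi(x)^{\phi(y)}) = f(\phi(x))\), so \(f \circ \phi\) is again a class function.

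Next, let \(\dual{\psi}^{*}\) denote the adjoint of \(\dual{\psi}\) with respect to the standard inner product, which is a Hermitian form on the finite-dimensional space \(\Class(G)\). For every \(b \in \B\) we have
\[
	\inprodarg{\dual{\phi}(b)}{\dual{\psi}(b)} = \inprodarg{\dual{\psi}^{*}\dual{\phi}(b)}{b}.
\]
Summing over an orthonormal basis, this gives \(S(\B) = \Tr(\dual{\psi}^{*} \circ \dual{\phi})\). Indeed, for any linear operator \(A\) and any orthonormal basis \(\B\), the sum \(\sum_{b} \inprodarg{A b}{b}\) is the sum of the diagonal entries of the matrix of \(A\) in \(\B\), that is, the trace of \(A\). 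The trace does not depend on the basis, so neither does \(S(\B)\).

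There is a convention to check here. The paper's inner product \(\inprodarg{f_1}{f_2} = \frac{1}{\card{G}}\sum_{g} f_1(g)\cconj{f_2}(g)\) is linear in the first argument and conjugate-linear in the second. Writing \(Ab = \sum_{b'} \inprodarg{Ab}{b'}\, b'\) confirms that the \(b\)-th diagonal coefficient of the matrix of \(A\) is \(\inprodarg{Ab}{b}\), so the trace identity holds with this convention.

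Alternatively, one can avoid adjoints and argue directly with a unitary change of basis. If \(\B' = \set{b'_i}{i}\) with \(b'_i = \sum_j u_{ji} b_j\) for a unitary matrix \(U = (u_{ji})\), then expanding both sides sesquilinearly gives
\[
	\sum_i \inprodarg{\dual{\phi}(b'_i)}{\dual{\psi}(b'_i)} = \sum_{j,k} \Bigl(\sum_i u_{ji}\cconj{u_{ki}}\Bigr) \inprodarg{\dual{\phi}(b_j)}{\dual{\psi}(b_k)}.
\]
Since \(UU^{*} = I\), the inner sum is \(\delta_{jk}\), and the right-hand side reduces to \(S(\B)\).

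None of the steps is a genuine obstacle. The only points that need care are verifying that \(\dual{\phi}\) preserves \(\Class(G)\) and keeping the conjugate-linearity convention consistent, so that the trace, or the \(UU^{*} = I\) cancellation, comes out exactly right.
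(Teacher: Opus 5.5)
Your proposal is correct and follows essentially the same route as the paper. The paper likewise shows that the trace of the sesquilinear form \((f,g) \mapsto \inprodarg{\dual{\phi}(f)}{\dual{\psi}(g)}\) is invariant under a unitary change of orthonormal basis (your second argument is exactly its computation), and then evaluates it on \(\Irr(G)\) via \cref{cor:SumInproductFormulaReidemeisterNumber}. Your adjoint formulation \(\Tr(\dual{\psi}^{*}\circ\dual{\phi})\) is a cleaner packaging of the same idea, and your check that \(\dual{\phi}\) preserves \(\Class(G)\) fills in a point the paper leaves implicit.
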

\begin{proof}
	Consider the sesquilinear map
	\[
		B \colon \Class(G) \times \Class(G) \to \C \colon (f, g) \mapsto \inprodarg{\dual{\phi}(f)}{\dual{\psi}(g)}.
	\]
	Its matrix representation with respect to the basis \(\B\) is given by
	\[
		M \coloneq \left(\inprodarg{\dual{\phi}(b_{i})}{\dual{\psi}(b_{j})} \right)_{ij},
	\]
	which means that
	\[
		B(f, g) = x^{T} M \bar{y}
	\]
	where \(x\) and \(y\) are the coordinates of \(f\) and \(g\), respectively, with respect to the basis \(\B\), and \(\bar{y}\) means complex conjugate.
	We then define the trace of \(B\) as \(\Tr B \coloneq \Tr M\) and this is independent of the chosen orthonormal basis.
	
	Indeed, for another orthonormal basis \(\B'\), let \(P\) be the matrix of change of bases from \(\B'\) to \(\B\).
	Since both bases are orthonormal, \(P\) is a unitary matrix, \ie \(P^T = \inv{\bar{P}}\).
	With respect to the basis \(\B'\), the matrix representation of \(B\) is given by
	\[
		B(f, g) = x^T P^T M \bar{P} \bar{y},
	\]
	where \(x\) and \(y\) are the coordinates of \(f\) and \(g\), respectively, with respect to the basis \(\B'\).
	Since \(P^{T} = \inv{\bar{P}}\), we obtain
	\[
		\Tr (P^{T} M \bar{P}) = \Tr (\inv{\bar{P}} M \bar{P}) = \Tr M.
	\]
	
	Now, with respect to the basis \(\Irr(G)\), the matrix representation of \(B\) is equal to
	\[
		\left(\inprodarg{\dual{\phi}(\chi_{i})}{\dual{\psi}(\chi_{j})} \right)_{ij}.
	\]
	Thus, we have
	\[
		\Tr B = \sum_{\chi \in \Irr(G)} \inprodarg{\dual{\phi}(\chi)}{\dual{\psi}(\chi)} = R(\phi, \psi)
	\]
	by \cref{cor:SumInproductFormulaReidemeisterNumber}.
	Therefore,
	\[
		R(\phi, \psi) = \Tr B = \sum_{b \in \B} \inprodarg{\dual{\phi}(b)}{\dual{\psi}(b)}. \qedhere
	\]
\end{proof}

\subsection{Further relations with representation theory}

For bi-twisted conjugacy with automorphisms, we can reformulate \cref{cor:SumInproductFormulaReidemeisterNumber} in terms of coincidences. Given two maps \(f, g\colon X \to Y\), the \emph{coincidence set} of \(f\) and \(g\) is
\[
	\Coin(f, g) \coloneq \set{x \in X}{f(x) = g(x)}.
\]
\begin{prop}
	Let \(G\) be a finite group and \(\phi, \psi \in \Aut(G)\). Then \(R(\phi, \psi) = \card{\Coin(\dual{\phi}, \dual{\psi})}\), where we consider
	\[
		\dual{\phi}\colon \Irr(G) \to \Irr(G)\colon \chi \mapsto \chi \circ \phi,
	\]
	and similarly for \(\dual{\psi}\).
	
	In particular, \(R(\psi) = \card{\Fix(\dual{\psi})}\).
\end{prop}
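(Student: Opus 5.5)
The plan is to start from \cref{cor:SumInproductFormulaReidemeisterNumber}, which gives
\[
	R(\phi, \psi) = \sum_{\chi \in \Irr(G)} \inprodarg{\chi \circ \phi}{\chi \circ \psi},
\]
and to show that for automorphisms each summand is either \(0\) or \(1\), with value \(1\) exactly when \(\chi \circ \phi = \chi \circ \psi\). Summing then counts \(\card{\Coin(\dual{\phi}, \dual{\psi})}\).

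The first step is to check that \(\dual{\phi}\) really maps \(\Irr(G)\) to itself when \(\phi \in \Aut(G)\). If \(\rho\) is an irreducible representation affording \(\chi\), then \(\rho \circ \phi\) is a representation affording \(\chi \circ \phi\). Its invariant subspaces are the same as those of \(\rho\), because \(\phi\) is surjective and so \(\rho(\phi(G)) = \rho(G)\). Hence \(\rho \circ \phi\) is irreducible. Equivalently, one can compute directly
\[
	\inprodarg{\chi \circ \phi}{\chi \circ \phi} = \frac{1}{\card{G}} \sum_{g \in G} \card{\chi(\phi(g))}^{2} = \inprodarg{\chi}{\chi} = 1,
\]
reindexing the sum by the bijection \(\phi\). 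The same holds for \(\psi\).

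The second step uses the orthonormality of \(\Irr(G)\). For irreducible characters \(\alpha = \chi \circ \phi\) and \(\beta = \chi \circ \psi\), the inner product \(\inprodarg{\alpha}{\beta}\) equals \(1\) if \(\alpha = \beta\) and \(0\) otherwise. So the summand indexed by \(\chi\) is \(1\) precisely when \(\chi \in \Coin(\dual{\phi}, \dual{\psi})\), and the equality \(R(\phi, \psi) = \card{\Coin(\dual{\phi}, \dual{\psi})}\) follows.

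For the ``in particular'' statement, take \(\phi = \Id\). Then \(\dual{\Id}\) is the identity on \(\Irr(G)\), and \(\Coin(\dual{\Id}, \dual{\psi}) = \Fix(\dual{\psi})\), which gives \(R(\psi) = \card{\Fix(\dual{\psi})}\).

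The only point needing care is the first step, namely irreducibility of \(\chi \circ \phi\). This is exactly where bijectivity of the maps is needed: for a general endomorphism, \(\chi \circ \phi\) can be reducible, and the summands could then exceed \(1\). I do not expect any real obstacle beyond this.
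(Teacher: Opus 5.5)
Your proposal is correct and follows the paper's proof: for automorphisms, \(\dual{\phi}\) and \(\dual{\psi}\) map \(\Irr(G)\) to itself, so by orthonormality each summand in \cref{cor:SumInproductFormulaReidemeisterNumber} is \(1\) or \(0\) according to whether \(\chi \circ \phi = \chi \circ \psi\). Setting \(\phi = \Id\) gives the fixed-point statement. Your explicit check that \(\chi \circ \phi\) is irreducible (via surjectivity of \(\phi\), or the norm computation) fills in a step the paper only asserts.
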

\begin{proof}
	Since \(\phi\) and \(\psi\) are automorphisms, both \(\dual{\phi}\) and \(\dual{\psi}\) are well defined and bijective. Thus, for every \(\chi \in \Irr(G)\), we have
	\[
		\inprodarg{\chi \circ \phi}{\chi \circ \psi} = \begin{cases}
 									1	&	\mbox{if } \chi \circ \phi = \chi \circ \psi,	\\
 									0	&	\mbox{otherwise.}	
 \end{cases}
	\]
	Hence, by \cref{cor:SumInproductFormulaReidemeisterNumber},
	\[
		R(\phi, \psi) = \sum_{\chi \in \Irr(G)} \inprodarg{\chi \circ \phi}{\chi \circ \psi} = \card{\set{\chi \in \Irr(G)}{\dual{\phi}(\chi) = \dual{\psi}(\chi)}},
	\]
	which is exactly \(\card{\Coin(\dual{\phi}, \dual{\psi})}\).
	
	If \(\phi = \Id\), then \(\dual{\phi} = \Id\) as well and \(\Coin(\dual{\phi}, \dual{\psi}) = \Fix(\dual{\psi})\).
\end{proof}

\begin{remark}
    Given \(\phi, \psi \in \Aut(G)\), it is readily verified that \(R(\phi, \psi) = R(\inv{\phi} \circ \psi)\) (see also \cref{prop:Reidbijections}).
    This essentially reduces bi-twisted conjugacy for automorphisms to twisted conjugacy of a single automorphism.
    Nonetheless, the formulation in terms of coincidences shows how the bi-twisted case is related to representation theory as well.
\end{remark}

\begin{cor}
	Let \(G\) be a finite group and \(\psi \in \Aut(G)\). Then \(R(\psi) = 1\) if and only if \(\dual{\psi}\) only fixes the trivial character.	
\end{cor}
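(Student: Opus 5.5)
This corollary follows by combining the preceding proposition with \cref{prop:ReidemeisterNumber1}, or even directly from that proposition alone. Taking \(\phi = \Id\) in the preceding proposition gives \(R(\psi) = \card{\Fix(\dual{\psi})}\), where \(\dual{\psi}\colon \Irr(G) \to \Irr(G)\colon \chi \mapsto \chi \circ \psi\) is a bijection because \(\psi \in \Aut(G)\). So \(R(\psi) = 1\) is equivalent to \(\dual{\psi}\) having exactly one fixed point in \(\Irr(G)\).

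The remaining observation is that the trivial character \(1_{G}\) is always fixed by \(\dual{\psi}\): indeed \((1_{G} \circ \psi)(g) = 1_{G}(\psi(g)) = 1\) for all \(g \in G\). Hence \(\Fix(\dual{\psi})\) always contains \(1_{G}\), and so \(\card{\Fix(\dual{\psi})} = 1\) holds if and only if \(\Fix(\dual{\psi}) = \{1_{G}\}\), i.e.\ if and only if \(\dual{\psi}\) fixes only the trivial character. Chaining these two equivalences gives the corollary.

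I do not expect a genuine obstacle here; the only point needing care is that the hypothesis \(\psi \in \Aut(G)\) is exactly what makes the preceding proposition applicable, since that is what guarantees \(\chi \circ \psi\) is again irreducible. As an optional remark, one could also phrase the conclusion via \cref{prop:ReidemeisterNumber1}: the three conditions \(R(\psi) = 1\), \(\Fix(\psi) = 1\), and \(\Fix(\dual{\psi}) = \{1_{G}\}\) are all equivalent, so \(\psi\) is fixed-point free precisely when \(\dual{\psi}\) fixes no non-trivial irreducible character.
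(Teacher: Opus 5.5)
Your proof is correct and is essentially the paper's argument: the paper likewise combines \(R(\psi) = \card{\Fix(\dual{\psi})}\) from the preceding proposition with the observation that the trivial character is always fixed by \(\dual{\psi}\). Your optional remark linking this to fixed-point freeness of \(\psi\) via \cref{prop:ReidemeisterNumber1} is also valid, though not needed for the corollary itself.
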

\begin{proof}
	Since \(R(\psi) = \card{\Fix(\dual{\psi})}\) and the trivial character is always fixed by \(\dual{\psi}\), the equivalence follows.
\end{proof}

With this equivalence, we can derive some information about the irreducible representations of a group that does admit fixed-point free automorphisms.
\begin{cor}
	Let \(G\) be a finite group.
	Suppose that \(G\) has either exactly two irreducible representations of degree \(1\), or a unique irreducible representation of degree \(d\) for some \(d \geq 2\).
	Then \(G\) does not admit a fixed-point free automorphism.
\end{cor}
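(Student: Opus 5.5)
We argue by contraposition, using the preceding corollary: \(G\) admits a fixed-point free automorphism \(\psi\) if and only if \(\dual{\psi}\) fixes only the trivial character. So suppose \(\psi \in \Aut(G)\) is fixed-point free. We show that neither hypothesis on \(\Irr(G)\) can hold.

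The key observation is that \(\dual{\psi}\colon \Irr(G) \to \Irr(G)\), \(\chi \mapsto \chi \circ \psi\), is a bijection that preserves degrees, since \((\chi\circ\psi)(1) = \chi(1)\). Hence \(\dual{\psi}\) permutes each set \(\Irr_d(G) \coloneq \set{\chi \in \Irr(G)}{\chi(1) = d}\).

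First suppose \(G\) has a unique irreducible character \(\chi\) of degree \(d \geq 2\). Then \(\Irr_d(G) = \{\chi\}\), so \(\dual{\psi}(\chi) = \chi\). But \(\chi\) is non-trivial, since the trivial character has degree \(1\). This contradicts the fact that \(\dual{\psi}\) fixes only the trivial character.

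Next suppose \(G\) has exactly two linear characters, namely the trivial character \(1_G\) and some \(\lambda\). We have \(\dual{\psi}(1_G) = 1_G\), because \(1_G \circ \psi = 1_G\). Since \(\dual{\psi}\) permutes \(\Irr_1(G) = \{1_G, \lambda\}\), it must also fix \(\lambda\). As \(\lambda \neq 1_G\), this again contradicts the corollary.

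In both cases \(G\) therefore admits no fixed-point free automorphism. There is no real obstacle here; the only points to check are that \(\dual{\psi}\) is a bijection on \(\Irr(G)\) (which holds because \(\psi\) is an automorphism, as noted in the proof of the previous proposition) and that it preserves degrees.
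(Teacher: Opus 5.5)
Your proof is correct and follows essentially the same route as the paper. Both arguments use that \(\dual{\psi}\) permutes the irreducible characters of each degree, which forces the distinguished non-trivial character to be fixed, so \(R(\psi) = \card{\Fix(\dual{\psi})} \geq 2\) and \(\psi\) has a non-trivial fixed point. You should cite \cref{prop:ReidemeisterNumber1} explicitly for that last step, since the preceding corollary only speaks of \(R(\psi) = 1\). Your explicit use of the bijectivity of \(\dual{\psi}\) in the linear case is slightly more careful than the paper's \emph{same degree} remark, which by itself does not rule out \(\lambda \circ \psi = 1_G\).
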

\begin{proof}
	Let \(\rho\) be either the unique non-trivial irreducible representation of degree \(1\), or the unique representation of degree \(d \geq 2\), and let \(\chi\) be its character.
	Let \(\psi \in \Aut(G)\).
    The trivial character \(1\) satisfies \(1 \circ \psi = 1\).
	Since \(\chi \circ \psi\) is an irreducible character of the same degree as \(\chi\), the equality \(\chi = \chi \circ \psi\) must hold as well.
	Therefore, \(R(\psi) = \card{\Fix(\dual{\psi})} \geq 2\) and \cref{prop:ReidemeisterNumber1} implies that \(\psi\) has a non-trivial fixed point.
\end{proof}

We can find another characterisation of \(R(\psi) = 1\) in terms of representation theory.

\begin{prop}	\label{prop:ReidemeisterNumber1IffEquivalentRegularRepresentation}
	Let \(G\) be a finite group and \(\psi \in \End(G)\). Then \(R(\psi) = 1\) if and only if \(T_{\psi}\) is equivalent to the regular representation of \(G\).
\end{prop}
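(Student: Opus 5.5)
Since \(T_{\psi}\) is a permutation representation, two such representations are equivalent exactly when their characters agree. The plan is therefore to compare the character \(\theta_{\psi} = \theta_{\Id, \psi}\) with the regular character \(\rho_{\mathrm{reg}}\), where \(\rho_{\mathrm{reg}}(1) = \card{G}\) and \(\rho_{\mathrm{reg}}(g) = 0\) for \(g \neq 1\).

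As in the proof of \cref{lem:ExpressionInproductBitwistedRepresentationIrreducibleCharacter}, we have
\[
	\theta_{\psi}(g) = \card{\set{x \in G}{g x \inv{\psi(g)} = x}}.
\]
In particular, \(\theta_{\psi}(1) = \card{G}\) always. Thus \(T_{\psi}\) is equivalent to the regular representation if and only if \(T_{\psi}(g)\) has no fixed points for every \(g \neq 1\). Equivalently, the twisted action of \(G\) on itself is free, i.e. \(\Stab_{\psi}(x) = 1\) for all \(x \in G\).

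\emph{Forward direction.} Suppose \(R(\psi) = 1\). Then there is a single orbit of size \(\card{G}\), so by the orbit-stabiliser theorem every stabiliser is trivial. Hence \(\theta_{\psi}(g) = 0\) for \(g \neq 1\), and \(\theta_{\psi} = \rho_{\mathrm{reg}}\).

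\emph{Converse.} Suppose \(\theta_{\psi} = \rho_{\mathrm{reg}}\). By \cref{prop:ReidemeisterNumberInproductBitwistedRepresentation1},
\[
	R(\psi) = \inprodarg{\theta_{\psi}}{1} = \inprodarg{\rho_{\mathrm{reg}}}{1} = 1,
\]
since the trivial character occurs in the regular representation with multiplicity \(1\). Alternatively, one can argue combinatorially: all stabilisers are trivial, in particular \(\Fix(\psi) = \Stab_{\psi}(1) = 1\), and \cref{prop:ReidemeisterNumber1} gives \(R(\psi) = 1\).

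No step is really an obstacle. The only point needing care is the justification that equivalence of representations is detected by equality of characters. This holds because we work with complex representations of a finite group, where characters determine representations up to equivalence.
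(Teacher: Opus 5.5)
Your proof is correct. Your converse, via \cref{prop:ReidemeisterNumberInproductBitwistedRepresentation1} and \(\inprodarg{\rho_{\mathrm{reg}}}{1} = 1\), is essentially the paper's. Your forward direction, however, takes a different route.

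The paper works with multiplicities. It uses \cref{lem:ExpressionInproductBitwistedRepresentationIrreducibleCharacter} to write \(\inprodarg{\theta_{\psi}}{\chi}\) as a sum over the \(g\) with \([\psi(g)] = [g]\). It then invokes \cref{theo:AdoReeNagao} (that \(R(\psi)\) equals the number of \(\psi\)-invariant conjugacy classes) to conclude that only \(g = 1\) contributes. Hence \(\inprodarg{\theta_{\psi}}{\chi} = \chi(1)\) for every \(\chi \in \Irr(G)\).

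You instead compare characters pointwise. The orbit--stabiliser theorem shows that \(R(\psi) = 1\) forces every \(\Stab_{\psi}(x)\) to be trivial, so \(\theta_{\psi}(g) = 0\) for all \(g \neq 1\).

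Your route is more elementary and self-contained, since it needs neither the Ado--Ree--Nagao theorem nor the multiplicity lemma. It also yields slightly more. It shows that \(R(\psi) = 1\) is equivalent to the \(\psi\)-twisted action being free. On a set of size \(\card{G}\), free is the same as simply transitive, so \(T_{\psi}\) is then isomorphic to the regular representation as a permutation representation, not merely linearly equivalent. The paper's route, in turn, is a direct application of the character-theoretic machinery built in that section and exhibits the multiplicities \(\chi(1)\) explicitly.

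One small wording point: characters determine representations up to equivalence for all complex representations of a finite group, not because \(T_{\psi}\) is a permutation representation. You note this yourself at the end, so the opening sentence should be phrased the same way.
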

\begin{proof}
	Suppose that \(R(\psi) = 1\). Let \(\chi \in \Irr(G)\) be arbitrary.
	By \cref{lem:ExpressionInproductBitwistedRepresentationIrreducibleCharacter},
	\[
		\inprodarg{\theta_{\psi}}{\chi} = \sum_{\substack{g \in G \\ [\psi(g)] = [g]}} \frac{1}{\card{[\psi(g)]}} \cconj{\chi}(g).
	\]
	Since \(R(\psi) = 1\), \cref{theo:AdoReeNagao} implies that \(1\) is the only \(g \in G\) such that \(\psi(g)\) is conjugate to \(g\).
	Therefore, the sum above reduces to
	\[
		\inprodarg{\theta_{\psi}}{\chi} = \cconj{\chi}(1) = \chi(1).
	\]
	Now, \(\inprodarg{\theta_{\rho}}{\chi} = \chi(1)\) as well, where \(\theta_\rho\) is the character of the regular representation \(\rho\) of \(G\).
	Since \(\chi\) was arbitrary, we conclude that \(\rho\) and \(T_{\psi}\) are equivalent.
	
	Conversely, suppose that \(\rho\) and \(T_{\psi}\) are equivalent.
	Then, by the previous arguments,
	\[
		\theta_{\psi} = \sum_{\chi \in \Irr(G)} \chi(1) \chi.
	\]
	By \cref{prop:ReidemeisterNumberInproductBitwistedRepresentation1},
	\[
		R(\psi) = \inprodarg{\theta_{\psi}}{1} = \sum_{\chi \in \Irr(G)} \chi(1) \inprodarg{\chi}{1}.
	\]
	
	Since \(1\) is an irreducible representation, the inner product \(\inprodarg{\chi}{1}\) is only non-zero when \(\chi = 1\).
	Therefore,
	\[
		R(\psi) = \inprodarg{1}{1} = 1. \qedhere
	\]
\end{proof}

\subsection{Class-preserving endomorphisms}
For various pairs of related endomorphisms, the Reidemeister numbers are equal, as the following folklore result shows:

\begin{prop}
	\label{prop:Reidbijections}
		Let \(G\) be a group, let \(\varphi,\psi \in \End(G)\), let \(\iota \in \Inn(G)\) and let \(\xi \in \Aut(G)\). The following equalities of Reidemeister numbers then hold:
	\begin{align*}
		R(\varphi,\psi) &= R(\psi,\varphi),\\
		R(\varphi,\psi) &= R(\iota \varphi,\psi),\\
		R(\varphi,\psi) &= R(\xi \varphi,\xi \psi).
	\end{align*}
\end{prop}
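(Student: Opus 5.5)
For each of the three equalities I will write down an explicit bijection between the relevant sets of bi-twisted conjugacy classes. The argument is purely group-theoretic, so finiteness is not needed; we only need maps that are well defined on classes and have well-defined inverses.

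\textbf{First equality.} We use inversion \(x \mapsto \inv{x}\). Suppose \(x = \phi(z) y \inv{\psi(z)}\). Taking inverses gives \(\inv{x} = \psi(z) \inv{y} \inv{\phi(z)}\), so \(\inv{x} \Rconj{\psi, \phi} \inv{y}\). Hence \([x]_{\phi,\psi} \mapsto [\inv{x}]_{\psi,\phi}\) is well defined. Applying the same argument with the roles of \(\phi\) and \(\psi\) swapped gives the inverse map, so the two class sets are in bijection.

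\textbf{Second equality.} Write \(\iota = \iota_a \colon g \mapsto a g \inv{a}\). Then
\[
	\iota_a\phi(z)\, (a x)\, \inv{\psi(z)} = a\, \phi(z) x \inv{\psi(z)},
\]
so left multiplication by \(a\) sends the \((\phi,\psi)\)-class of \(x\) onto the \((\iota\phi,\psi)\)-class of \(ax\). Left multiplication by \(\inv{a}\) inverts this, which gives the bijection \([x]_{\phi,\psi} \mapsto [ax]_{\iota\phi,\psi}\).

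\textbf{Third equality.} The map \(x \mapsto \xi(x)\) works:
\[
	\xi\bigl(\phi(z) y \inv{\psi(z)}\bigr) = \xi\phi(z)\, \xi(y)\, \inv{\xi\psi(z)}.
\]
So if \(x \Rconj{\phi,\psi} y\), then \(\xi(x) \Rconj{\xi\phi,\xi\psi} \xi(y)\). For the converse, suppose \(\xi(x) = \xi\phi(z)\, \xi(y)\, \inv{\xi\psi(z)}\). Applying \(\inv{\xi}\) recovers \(x = \phi(z) y \inv{\psi(z)}\). This is the one place where we need \(\xi\) to be an automorphism and not merely an endomorphism: bijectivity of \(\xi\) gives both well-definedness of the inverse map and surjectivity onto the classes.

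None of these steps is a real obstacle. The only point requiring care is the converse direction in the third equality, where invertibility of \(\xi\) is used.
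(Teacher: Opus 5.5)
Your proposal is correct and matches the paper's proof: the paper uses the same three bijections of \(G\), namely \(g \mapsto \inv{g}\), \(g \mapsto hg\) with \(\iota(g) = hgh^{-1}\), and \(g \mapsto \xi(g)\), and shows that each induces a bijection of the bi-twisted conjugacy classes. You also write out the well-definedness and inverse checks, including where bijectivity of \(\xi\) is used, which the paper leaves to the reader.
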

\begin{proof} 
        Let \(h \in G\) be an element such that \(\iota(g) = hgh^{-1}\) for all \(g \in G\), and consider the following bijections of \(G\):
		\begin{equation*}
		g \mapsto g^{-1},\qquad g \mapsto hg,\qquad g \mapsto \xi(g).
		\end{equation*}
		These induce the following bijections of Reidemeister classes respectively:
		\begin{alignat*}{4}
			&\Reid[\varphi,\psi] \to \Reid[\psi,\varphi]\colon&&[g]_{\varphi,\psi} \mapsto  [g^{-1}]_{\psi,\varphi},\\
			&\Reid[\varphi,\psi] \to \Reid[\iota \varphi,\psi]\colon&&[g]_{\varphi,\psi} \mapsto  [hg]_{\iota \varphi,\psi},\\
			&\Reid[\varphi,\psi] \to \Reid[\xi \varphi,\xi \psi]\colon&&[g]_{\varphi,\psi} \mapsto  [\xi(g)]_{\xi \varphi,\xi \psi}.
		\end{alignat*}
        The equalities of Reidemeister numbers follow from these immediately.
\end{proof}

We extend this with the notion of class-preserving endomorphisms.
Given a group \(G\) and \(\psi \in \End(G)\), we say that \(\psi\) is \emph{class-preserving} if \([\psi(g)] = [g]\) for all \(g \in G\).

All inner automorphisms are class-preserving; however, not all class-preserving endomorphisms are inner, as the following example shows:

\begin{example}
Let \(G\) be the group with presentation \(\langle x,y,z \mid x^8 = y^2 = z^2 = [y,z] = 1, yx = x^3y, zx = x^5z \rangle\), and consider the automorphism \(\psi\) defined by
\[ \psi(x) = x, \quad \psi(y) = x^6y, \quad \psi(z) = z.\]
Any \(g \in G\) for which \(gxg^{-1} = \psi(x) = x\) must itself be a power of \(x\), say \(g = x^a\) with \(0 \leq a \leq 7\). However, if \(x^a y x^{-a} = \psi(y) =  x^6 y\), then \(a \in \{1,5\}\), whereas if \(x^a z x^{-a} = \psi(z) = z\), then \(a\) is even. Thus \(\psi\) is not inner; however, it is class-preserving:
\begin{align*}
\psi(x^a) &= x^a,&\psi(x^a y) &= x^5(x^a y)x^{-5},\\
\psi(x^a z) &= x^a z,&\psi(x^a yz) &= x^3(x^ayz)x^{-3},
\end{align*}
for all \(a \in \{0,\ldots,7\}\).
\end{example}

\begin{prop}	\label{prop:GeneralisationCompositionWithClassPreservingEndomorphismPreservesReidemeisterNumber}
	Let \(G\) be a finite group and \(\phi,\psi \in \End(G)\).
	Suppose \(\xi \in \End(G)\) is class-preserving.
	Then
	\begin{equation*}
		R(\phi, \psi) = R(\xi\phi, \psi)  = R(\phi\xi, \psi)= R(\phi, \xi\psi)= R(\phi,\psi\xi).
	\end{equation*}
\end{prop}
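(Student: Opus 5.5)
Since class functions are constant on conjugacy classes, the natural route is through \cref{cor:SumInproductFormulaReidemeisterNumber}. For every \(\chi \in \Irr(G)\), class-preservation \([\xi(g)] = [g]\) gives \(\chi \circ \xi = \chi\). So we get \(\chi \circ \xi \circ \phi = \chi \circ \phi\), and likewise \(\chi \circ \xi \circ \psi = \chi \circ \psi\).

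Next, \(\chi \circ \phi \circ \xi\) agrees with \(\chi \circ \phi\) in the following sense. Evaluated at \(g\), it gives \(\chi(\phi(\xi(g)))\). Now \(\xi(g) = hgh^{-1}\) for some \(h\) depending on \(g\), so \(\phi(\xi(g)) = \phi(h)\phi(g)\phi(h)^{-1}\) is conjugate to \(\phi(g)\). Hence \(\chi \circ \phi \circ \xi = \chi \circ \phi\) as functions. The same holds with \(\psi\) in place of \(\phi\).

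Substituting these equalities into the formula
\[
R(\phi,\psi) = \sum_{\chi \in \Irr(G)} \inprodarg{\chi \circ \phi}{\chi \circ \psi}
\]
shows that each summand is unchanged when \(\phi\) is replaced by \(\xi\phi\) or \(\phi\xi\), or when \(\psi\) is replaced by \(\xi\psi\) or \(\psi\xi\). Each of the four Reidemeister numbers is therefore given by literally the same sum as \(R(\phi,\psi)\), which yields all the stated equalities.

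There is no real obstacle here. The only point needing care is that the conjugating element \(h\) depends on \(g\). This is harmless because we only use conjugacy pointwise and apply the homomorphism \(\phi\) (or \(\psi\)) to that single relation. Alternatively, one could argue via \cref{prop:biTwistedConjugacyNumberInTermsOfConjugacyClasses}, but the character formula is cleaner.
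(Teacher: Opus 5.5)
Your proof is correct and follows the paper's argument: both rest on \cref{cor:SumInproductFormulaReidemeisterNumber} together with \(\chi \circ \xi = \chi\) for class-preserving \(\xi\). You also make explicit the cases \(\phi\xi\) and \(\psi\xi\), using that \(\chi \circ \phi\) and \(\chi \circ \psi\) are class functions, which the paper leaves as ``proven similarly''.
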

\begin{proof}
	By \cref{cor:SumInproductFormulaReidemeisterNumber}
	\[
		R(\xi \phi, \psi) = \sum_{\chi \in \Irr(G)} \inprodarg{\chi \circ \xi\phi}{\chi \circ \psi}.
	\]
	As \(\chi\) is a character, it is a class function on \(G\).
	So, since \(\xi\) is class-preserving, \(\chi \circ \xi = \chi\).
	Hence,
	\[
		R(\xi \phi, \psi) = \sum_{\chi \in \Irr(G)} \inprodarg{\chi \circ \xi\phi}{\chi \circ \psi} = \sum_{\chi \in \Irr(G)} \inprodarg{\chi \circ \phi}{\chi \circ \psi} = R(\phi, \psi).
	\]
	
	The other equalities are proven similarly.
\end{proof}

This result is, in a sense, surprising. Consider the previous example: while it is the case that \(\# \Conj(G) = R(\psi) = 11\) (recall that \(\Conj(G)\) is the set of (ordinary) conjugacy classes), there are \(2\) conjugacy classes of size \(1\), \(3\) classes of size \(2\), and \(6\) classes of size \(4\), whereas there are \(6\) \(\psi\)-conjugacy classes of size \(2\), and \(5\) classes of size \(4\). Thus, there cannot exist a bijection \(f \colon G \to G\) such that the induced map \({\Conj(G) \to \Reid[\psi] \colon [g] \mapsto [f(g)]_{\psi}}\) is well defined. This is in contrast with \cref{prop:Reidbijections}, where each of the equalities was obtained from such a bijection.

\section{Counting conjugacy classes}
\subsection{Formulae}

Using \cref{theo:ReidemeisterNumberBiTwistedEqualsTraceBilinearForm}, we derive a generalisation of equality (1) and (2) in \cref{theo:AdoReeNagao} for bi-twisted conjugacy.

\biTwistedConjugacyNumberInTermsOfConjugacyClasses
\begin{proof}
For a subset \(X \subseteq G\), we write \(\ind{X}\) for the indicator function of \(X\), that is
\[
	\ind{X} \colon G \to \C \colon g \mapsto \begin{cases}
		1	&	\mbox{ if \(g \in X\)},	\\
		0	&	\mbox{ otherwise}.
	\end{cases}
\]

It is clear that the set \(\{\Delta_{[g]} \mid [g] \in \Conj(G)\}\) forms an orthogonal basis of \(\Class(G)\), where \([g]\) denotes the ordinary conjugacy class of \(g\).
To make it orthonormal, we divide each \(\Delta_{[g]}\) by the square root of its norm, which is given by
\[
	\inprodarg{\Delta_{[g]}}{\Delta_{[g]}} = \frac{1}{\card{G}} \sum_{h \in [g]} 1 = \frac{\card{[g]}}{\card{G}} = \frac{1}{\card{C_{G}(g)}}.
\]
Thus, put \(c_{[g]} \coloneq \sqrt{\card{C_{G}(g)}} \cdot \Delta_{[g]}\).
\cref{theo:ReidemeisterNumberBiTwistedEqualsTraceBilinearForm} applied to \(\{c_{[g]} \mid [g] \in \Conj(G)\}\) states that
\begin{align*}
	R(\phi, \psi)	&= \sum_{[g] \in \Conj(G)} \inprodarg{c_{[g]} \circ \phi}{c_{[g]} \circ \psi}	\\
				&= \sum_{[g] \in \Conj(G)} \card{C_{G}(g)} \cdot \inprodarg{\Delta_{[g]} \circ \phi}{\Delta_{[g]} \circ \psi}.
\end{align*}

Fix \(g \in G\).
For \(h \in G\), we have
\[
	(\Delta_{[g]} \circ \phi)(h) = \begin{cases}
		1	&	\mbox{ if \(\phi(h) \conj g\)},	\\
		0	&	\mbox{ otherwise}.
	\end{cases}
\]
Consequently,
\[
	\Delta_{[g]} \circ \phi = \Delta_{\inv{\phi}([g])},
\]
This implies that
\begin{align*}
	R(\phi, \psi)	&= \sum_{[g] \in \Conj(G)} \card{C_{G}(g)} \cdot \inprodarg{\Delta_{\inv{\phi}([g])}}{\Delta_{\inv{\psi}([g])}}	\\
				&= \sum_{[g] \in \Conj(G)} \frac{\card{C_{G}(g)} \cdot \card{\left(\inv{\phi}([g]) \cap \inv{\psi}([g])\right)}}{\card{G}}	\\
				&= \sum_{[g] \in \Conj(G)} \frac{\card{\left(\inv{\phi}([g]) \cap \inv{\psi}([g])\right)}}{\card{[g]}}. \qedhere
\end{align*}
\end{proof}
Note that for \(\phi = \Id\), we obtain
\[
	R(\psi) = \sum_{[g] \in \Conj(G)} \frac{\card{\left([g] \cap \inv{\psi}([g])\right)}}{\card{[g]}}.
\]
As \(\inv{\psi}([g])\) is a union of conjugacy classes, we find that
\[
	\card{\left([g] \cap \inv{\psi}([g])\right)} = \begin{cases}
		\card{[g]}	&	\mbox{if \([g] \subseteq \inv{\psi}([g])\),}	\\
		0		&	\mbox{otherwise.}
	\end{cases}
\]
In other words, \(\card{\left([g] \cap \inv{\psi}([g])\right)}\) is non-zero if and only if \(\psi([g]) \subseteq [g]\), which is equivalent to \([\psi(g)] = [g]\).
Thus,
\[
	R(\psi) = \sum_{\substack{[g] \in \Conj(G)\\ [\psi(g)] = [g]}} \frac{\card{[g]}}{\card{[g]}} = \sum_{\substack{[g] \in \Conj(G) \\ [\psi(g)] = [g]}} 1,
\]
which is the content of equality of (1) and (2) in \cref{theo:AdoReeNagao}.
\begin{cor}
\label{prop:ReidemeisterCoincidenceNumberEqualsConjugacySum}
Let \(G\) be a finite group and let \(\phi,\psi \in \End(G)\). Then
\begin{equation*}
	R(\phi,\psi) = \sum_{\substack{[g] \in \Conj(G) \\ [\phi(g)] = [\psi(g)]}} \frac{\card{[g]}}{\card{[\phi(g)]}}.
\end{equation*}
\end{cor}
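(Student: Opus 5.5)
Start from \cref{prop:biTwistedConjugacyNumberInTermsOfConjugacyClasses}, which writes \(R(\phi,\psi)\) as a sum over \([g] \in \Conj(G)\) of \(\card{(\inv{\phi}([g]) \cap \inv{\psi}([g]))}/\card{[g]}\). The plan is to rewrite this sum so that the summation runs over the domain classes \([h]\) instead of the target classes \([g]\).

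First, note that \(\inv{\phi}([g]) \cap \inv{\psi}([g])\) consists of exactly those \(h \in G\) with \([\phi(h)] = [g] = [\psi(h)]\). Every \(h \in G\) with \([\phi(h)] = [\psi(h)]\) therefore lies in exactly one of these sets, namely the one indexed by \([g] = [\phi(h)]\). Elements with \([\phi(h)] \neq [\psi(h)]\) lie in none of them. Hence
\[
	R(\phi,\psi) = \sum_{[g] \in \Conj(G)} \sum_{\substack{h \in \inv{\phi}([g]) \cap \inv{\psi}([g])}} \frac{1}{\card{[g]}} = \sum_{\substack{h \in G \\ [\phi(h)] = [\psi(h)]}} \frac{1}{\card{[\phi(h)]}},
\]
since the denominator \(\card{[g]}\) equals \(\card{[\phi(h)]}\) for each \(h\) in the inner sum. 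Incidentally, this matches the expression from \cref{lem:ExpressionInproductBitwistedRepresentationIrreducibleCharacter} with \(\chi = 1\), combined with \cref{prop:ReidemeisterNumberInproductBitwistedRepresentation1}, so that route gives the same intermediate formula.

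Second, group the terms of the sum over \(h\) by the conjugacy class of \(h\). Both the condition \([\phi(h)] = [\psi(h)]\) and the quantity \(\card{[\phi(h)]}\) depend only on \([h]\), because endomorphisms map conjugates to conjugates: \(\phi(xhx^{-1}) = \phi(x)\phi(h)\phi(x)^{-1}\), and similarly for \(\psi\). Each class \([h]\) therefore contributes \(\card{[h]}\) equal terms, and we obtain
\[
	R(\phi,\psi) = \sum_{\substack{[h] \in \Conj(G)\\ [\phi(h)] = [\psi(h)]}} \frac{\card{[h]}}{\card{[\phi(h)]}},
\]
which is the claim after renaming \(h\) as \(g\).

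The only point needing care is the well-definedness in the second step, namely that the summand and the condition are class functions of the summation variable. This follows immediately from the homomorphism property above, so I expect no real obstacle; the proof is a reindexing of the earlier formula.
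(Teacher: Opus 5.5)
Your proposal is correct and follows essentially the paper's own argument. The paper also starts from \cref{prop:biTwistedConjugacyNumberInTermsOfConjugacyClasses}, decomposes \(\inv{\phi}([g]) \cap \inv{\psi}([g])\) into the classes \([h]\) with \([\phi(h)] = [\psi(h)] = [g]\), and swaps the order of summation; your passage through the element-wise sum \(\sum_{h} 1/\card{[\phi(h)]}\) is the same reindexing, just carried out in two steps.
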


\begin{proof}
As \(\inv{\phi}([g])\) and \(\inv{\psi}([g])\) are both a union of conjugacy classes, we find from \cref{prop:biTwistedConjugacyNumberInTermsOfConjugacyClasses} that
\[
	\card{\left(\inv{\phi}([g]) \cap \inv{\psi}([g])\right)} = \sum_{\substack{[h] \in \Conj(G) \\ [h] \subseteq \inv{\phi}([g]) \cap \inv{\psi}([g])}} \card{[h]}.
\]
For \(h \in G\), the condition \([h] \subseteq \inv{\phi}([g]) \cap \inv{\psi}([g])\) is equivalent to \(\phi([h]) \cup \psi([h]) \subseteq [g]\), which in turn is equivalent to \([\phi(h)] = [\psi(h)] = [g]\).
Therefore,
\begin{align*}
	R(\phi, \psi)	&= \sum_{[g] \in \Conj(G)} \frac{\card{\left(\inv{\phi}([g]) \cap \inv{\psi}([g])\right)}}{\card{[g]}}	\\
				&= \sum_{[g] \in \Conj(G)} \sum_{\substack{[h] \in \Conj(G) \\ [\phi(h)] = [\psi(h)] = [g]}} \frac{\card{[h]}}{\card{[g]}}	\\
				&= \sum_{\substack{[h] \in \Conj(G) \\ [\phi(h)] = [\psi(h)]}} \sum_{\substack{[g] \in \Conj(G) \\ [g] = [\phi(h)] = [\psi(h)]}} \frac{\card{[h]}}{\card{[g]}}	\\
				&= \sum_{\substack{[h] \in \Conj(G) \\ [\phi(h)] = [\psi(h)]}} \frac{\card{[h]}}{\card{[\phi(h)]}}.\qedhere
\end{align*}
\end{proof}

Next, we prove a generalisation of \cref{theo:AdoReeNagao} for single twisted conjugacy:

\reidemeisterNumberEqualsNumberFixedConjugacyClasses

The case \(\phi = \Id\) is then exactly equality of (1) and (2) in \cref{theo:AdoReeNagao}.

\begin{lemma}	\label{lem:SizeStabiliserEqualsSizeTransporter}
Let \(G\) be a group acting on a set \(X\).
Suppose that \(x, y \in X\) are such that \(y \in G \cdot x\).
Then
	\[
		\card{\Stab(x)} = \card{\set{h \in G}{h \cdot x = y}} = \card{\set{h \in G}{h \cdot y = x}}.
	\]
\end{lemma}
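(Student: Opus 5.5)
The lemma is a coset-counting statement: the set of group elements sending \(x\) to \(y\) is a left coset of \(\Stab(x)\). Since \(y \in G \cdot x\), I will fix some \(g \in G\) with \(g \cdot x = y\). The main claim is that
\[
	\set{h \in G}{h \cdot x = y} = g \Stab(x).
\]

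For the inclusion \(\supseteq\), take \(s \in \Stab(x)\); then \((gs)\cdot x = g \cdot (s \cdot x) = g \cdot x = y\). For \(\subseteq\), suppose \(h \cdot x = y\). Then \((\inv{g}h)\cdot x = \inv{g}\cdot y = x\), so \(\inv{g}h \in \Stab(x)\) and hence \(h \in g\Stab(x)\). Left multiplication by \(g\) is a bijection of \(G\), so the first equality of cardinalities follows.

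For the second set, I will apply the same argument with the roles of \(x\) and \(y\) swapped; this is allowed because \(x = \inv{g}\cdot y\), so \(x \in G \cdot y\). It gives
\[
	\card{\set{h \in G}{h \cdot y = x}} = \card{\Stab(y)}.
\]
It remains to compare the two stabilisers, which are conjugate: \(\Stab(y) = g\Stab(x)\inv{g}\). Indeed, \(s \cdot x = x\) if and only if \((gs\inv{g})\cdot y = y\). Hence \(\card{\Stab(y)} = \card{\Stab(x)}\).

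Alternatively, inversion \(h \mapsto \inv{h}\) gives a bijection between the two transporter sets directly, since \(h\cdot x = y\) if and only if \(\inv{h}\cdot y = x\). I would use this shortcut instead of the conjugation argument.

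Nothing here is a real obstacle. The only care needed is to remember that \(G\) need not be finite, so the equalities should be read as equalities of cardinalities obtained from explicit bijections, not from orbit–stabiliser division.
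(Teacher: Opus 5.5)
Your proposal is correct and follows the paper's proof: the paper likewise fixes \(g_{0}\) with \(g_{0} \cdot x = y\) and uses \(h \mapsto \inv{g_{0}} h\) (the inverse of your left multiplication by \(g\)) to give a bijection from the transporter set onto \(\Stab(x)\), and for the second equality it uses inversion \(h \mapsto \inv{h}\), which is the shortcut you chose. Your conjugate-stabiliser argument is also valid but not needed.
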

\begin{proof}
	Let \(g_{0} \in G\) be such that \(g_{0} \cdot x = y\).
	Then for all \(g \in G\), we have
	\[
		g \cdot x = y \iff (\inv{g}_{0} g) \cdot x = x.
	\]
	Thus, the map \(g \mapsto \inv{g}_{0} g\) defines a bijection between \(\set{g \in G}{g \cdot x = y}\) and \(\Stab(x)\).
	
	For the second equality, note that, for all \(g \in G\), \(g \cdot x = y\) if and only if \(\inv{g} \cdot y = x\).
	Hence, the map \(g \mapsto \inv{g}\) defines a bijection between \(\set{g \in G}{g \cdot x = y}\) and \(\set{g \in G}{g \cdot y = x}\).
\end{proof}

\begin{proof}[Proof of \cref{theo:ReidemeisterNumberEqualsNumberFixedConjugacyClasses}]
	First, we have to prove that \(\Xi([g]_{\phi})\) is independent of the chosen representative.
Suppose that \(g \Rconj{\phi} h\) for some \(g, h \in G\).
Write \(g = x h \inv{\phi(x)}\) for some \(x \in G\).
Then
	\[
		\psi(g) = \psi(x) \psi(h) \psi\left(\inv{\phi(x)}\right) = \psi(x) \psi(h) \inv{\phi(\psi(x))},
	\]
	since \(\phi\) and \(\psi\) commute.
Therefore, \(\psi(g) \Rconj{\phi} \psi(h)\).
	
	Next, we prove that \(R(\psi) = \card{\Fix(\Xi)}\).
	We base our argument on the one Nagao gives for \cref{theo:AdoReeNagao} \cite[Lemma~2]{Nagao62}.
	
	We count the number of solutions in \(G \times G\) of the equation
	\begin{equation}	\label{eq:alteredTwistedConjugacyEquation1}
		x = g \phi(x) \inv{\psi(g)}
	\end{equation}
	or, equivalently, of
	\begin{equation}	\label{eq:alteredTwistedConjugacyEquation2}
		g = x \psi(g) \inv{\phi(x)}.
	\end{equation}
	
	For \eqref{eq:alteredTwistedConjugacyEquation1}, let \(x \in G\) be fixed.
	By assumption, \(x \Rconj{\psi} \phi(x)\), so there is at least one solution.
	\cref{lem:SizeStabiliserEqualsSizeTransporter} then implies that there are exactly \(\card{\Stab_{\psi}(x)}\) solutions.
	Thus, the number of solutions is given by
	\begin{equation}	\label{eq:solutionsToAlteredTwistedConjugacyEquation1}
		\sum_{x \in G} \card{\Stab_{\psi}(x)} = \card{G} \cdot \sum_{x \in G} \frac{1}{\card{[x]_{\psi}}} = \card{G} \cdot R(\psi).
	\end{equation}
	where we used the orbit-stabiliser theorem and the fact that for each \(x \in G\), the class \([x]_{\psi}\) is counted exactly \(\card{[x]_{\psi}}\) times in the last summation.
	
	For \eqref{eq:alteredTwistedConjugacyEquation2}, let \(g \in G\) be fixed.
	For there to be a solution, \(g\) and \(\psi(g)\) must be \(\phi\)-conjugate.
	If there is a solution, \cref{lem:SizeStabiliserEqualsSizeTransporter} implies that there are exactly \(\card{\Stab_{\phi}(g)}\) solutions.
	Thus, the number of solutions is given by
	
	\begin{equation}	\label{eq:solutionsToAlteredTwistedConjugacyEquation2}
		\sum_{\substack{g \in G \\ g \Rconj{\phi} \psi(g)}} \card{\Stab_{\phi}(g)} = \card{G} \cdot \sum_{\substack{g \in G \\ g \Rconj{\phi} \psi(g)}}  \frac{1}{\card{[g]_{\phi}}} = \card{G} \cdot \card{\Fix(\Xi)},
	\end{equation}
	where we again used the orbit-stabiliser theorem.
	For the last equality, we have to prove that given a \(g \in G\) such that \(g \Rconj{\phi} \psi(g)\), each \(x \in [g]_{\phi}\) also satisfies \(x \Rconj{\phi} \psi(x)\).
	Earlier, we argued that \(g \Rconj{\phi} x\) implies \(\psi(g) \Rconj{\phi} \psi(x)\).
	Therefore,
	\[
		x \Rconj{\phi} g \Rconj{\phi} \psi(g) \Rconj{\phi} \psi(x).
	\]
	
	Finally, from \eqref{eq:solutionsToAlteredTwistedConjugacyEquation1} and \eqref{eq:solutionsToAlteredTwistedConjugacyEquation2}, it follows that \(R(\psi) = \card{\Fix(\Xi)}\).

	To argue that \(\phi = \psi^{k}\) satisfies the conditions of the theorem for all \(k \geq 1\), first note that clearly \(\psi \circ \psi^{k} = \psi^{k} \circ \psi\).
Next, observe that \(g = g \psi(g) \inv{\psi(g)}\) for all \(g \in G\).
Hence, \(g \Rconj{\psi} \psi(g)\) for all \(g \in G\).
By induction and transitivity of \(\psi\)-twisted conjugacy, we obtain that \(g \Rconj{\psi} \psi^{k}(g)\) for all \(g \in G\).
\end{proof}

\subsection{Inequalities}
We start with an inequality for Reidemeister numbers of powers of endomorphisms.
\begin{prop}	\label{prop:ReidemeisterNumberOfPowerOfEndomorphismFiniteGroupIsGreater}
	Let \(G\) be a finite group and \(\psi \in \End(G)\).
Let \(k \geq 0\) be an integer.
Then \(R(\psi^{k}) \geq R(\psi)\).
In particular, if \(\psi \in \Aut(G)\) and \(k\) is coprime with the order of \(\psi\), then \(R(\psi^{k}) = R(\psi)\).
\end{prop}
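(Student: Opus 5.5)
The plan is to get the inequality directly from \cref{theo:ReidemeisterNumberEqualsNumberFixedConjugacyClasses}, applied with \(\phi = \psi^{k}\). For \(k \geq 1\), the final clause of that theorem says that \(\phi = \psi^{k}\) satisfies its hypotheses. Hence the map \(\Xi\colon \Reid[\psi^{k}] \to \Reid[\psi^{k}]\colon [g]_{\psi^{k}} \mapsto [\psi(g)]_{\psi^{k}}\) is well defined and \(R(\psi) = \card{\Fix(\Xi)}\). Since \(\Fix(\Xi)\) is a subset of \(\Reid[\psi^{k}]\), we get
\[
	R(\psi) = \card{\Fix(\Xi)} \leq \card{\Reid[\psi^{k}]} = R(\psi^{k}).
\]

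For \(k = 0\) we have \(\psi^{0} = \Id\), so \(R(\psi^{0}) = \card{\Conj(G)}\). The theorem also covers \(\phi = \Id\): commutation is trivial, and \(g \Rconj{\psi} g\) always holds. It then gives \(R(\psi)\) as the number of fixed points of \(\Xi\) on \(\Conj(G)\), which is at most \(\card{\Conj(G)}\). Equivalently, by \cref{theo:AdoReeNagao}, \(R(\psi)\) counts the \(\psi\)-invariant conjugacy classes, so \(R(\psi) \leq \card{\Conj(G)}\).

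For the equality, let \(\psi \in \Aut(G)\) have order \(n\) and suppose \(\gcd(k, n) = 1\). Choose an integer \(l \geq 1\) with \(kl \equiv 1 \pmod{n}\), so that \((\psi^{k})^{l} = \psi^{kl} = \psi\). Applying the inequality already proven to the endomorphism \(\psi^{k}\) with exponent \(l\) gives \(R(\psi) = R((\psi^{k})^{l}) \geq R(\psi^{k})\). Combined with \(R(\psi^{k}) \geq R(\psi)\), this yields equality. The degenerate case \(n = 1\), where \(\psi = \Id\), is trivial.

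There is no real obstacle here: the substantive work is already contained in \cref{theo:ReidemeisterNumberEqualsNumberFixedConjugacyClasses}. The only points needing care are the edge cases. One is \(k = 0\), handled above. The other is the choice of a positive inverse \(l\) of \(k\) modulo \(n\), which always exists when \(\gcd(k, n) = 1\).
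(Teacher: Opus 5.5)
Your proposal is correct and follows the paper's proof. The inequality comes from applying \cref{theo:ReidemeisterNumberEqualsNumberFixedConjugacyClasses} with \(\phi = \psi^{k}\) and using \(\Fix(\Xi) \subseteq \Reid[\psi^{k}]\); the equality comes from writing \(\psi = (\psi^{k})^{l}\) with \(l \geq 1\), which the paper obtains from B\'ezout as \(ak + bn = 1\) with \(a \geq 1\), and then applying the inequality once more. Your explicit handling of \(k = 0\) through the \(\phi = \Id\) case is a small detail the paper leaves implicit.
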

\begin{proof}
If we put \(\phi = \psi^{k}\) in \cref{theo:ReidemeisterNumberEqualsNumberFixedConjugacyClasses}, we immediately get
\[
	R(\psi) = \card{\Fix(\Xi)} \leq \card{\Reid[\psi^{k}]} = R(\psi^{k}).
\]
Now, let \(\psi \in \Aut(G)\) and let \(n\) be its order.
Let \(k \geq 0\) be coprime with \(n\).
We already have \(R(\psi^{k}) \geq R(\psi)\).
Since \(k\) and \(n\) are coprime, there exist \(a, b \in \Z\) such that \(ak + bn = 1\).
We may assume that \(a \geq 1\).
Then \(\psi = \psi^{ak + bn} = \psi^{ak}\).
Since \(a \geq 1\), we have \(R(\psi) = R(\psi^{ak}) \geq R(\psi^{k})\).
Consequently, \(R(\psi) = R(\psi^{k})\).
\end{proof}

Next, for a central extension \(1 \to C \to G \to G / C \to 1\) and \(\phi,\psi \in \End(G)\) such that \(\phi(C),\psi(C) \leq C\), we can find both an upper and a lower bound on the Reidemeister number \(R(\phi,\psi)\). We start with the upper bound, which does not require \(G\) to be a finite group.

\begin{prop}
Let \(G\) be a group and \(C \leq Z(G)\) a central subgroup. Let \(\phi,\psi \in \End(G)\) be such that \(\phi(C),\psi(C) \leq C\). Let \(\restr{\phi}{C}\) and \(\restr{\psi}{C}\) denote the restricted endomorphisms on \(C\), and \(\bar{\phi},\bar{\psi}\) the induced endomorphisms on \(G/C\). Then \(R(\phi,\psi) \leq R(\restr{\phi}{C},\restr{\psi}{C})R(\bar{\phi},\bar{\psi})\).
\end{prop}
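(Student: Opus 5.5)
The plan is to bound the number of \((\phi,\psi)\)-classes of \(G\) by a fibration argument over the projection \(\pi\colon G \to G/C\). We may assume both Reidemeister numbers on the right-hand side are finite, since otherwise there is nothing to prove.

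First, \(\pi\) maps \((\phi,\psi)\)-classes onto \((\bar\phi,\bar\psi)\)-classes. Indeed, if \(x = \phi(z)y\inv{\psi(z)}\), then \(\pi(x) = \bar\phi(\pi(z))\pi(y)\inv{\bar\psi(\pi(z))}\). This gives a well-defined surjection \(\Reid[\phi,\psi] \to \Reid[\bar\phi,\bar\psi]\). It therefore suffices to show that every fibre of this map contains at most \(R(\restr{\phi}{C},\restr{\psi}{C})\) classes.

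Fix \(g \in G\) and consider the classes \([x]_{\phi,\psi}\) with \(\pi(x) \Rconj{\bar\phi,\bar\psi} \pi(g)\). Choose \(z\) with \(\pi(x) = \bar\phi(\pi(z))\pi(g)\inv{\bar\psi(\pi(z))}\). Then \(x\) is \((\phi,\psi)\)-conjugate to \(\inv{\phi(z)}x\psi(z)\), which lies in the coset \(gC\). So every class in the fibre meets \(gC\), and we can represent it by \(gc\) for some \(c \in C\).

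Next, compare \(gc\) and \(gc'\) for \(c,c' \in C\). For \(w \in C\), centrality gives
\[
	\phi(w)\,gc\,\inv{\psi(w)} = g\,\phi(w)c\inv{\psi(w)}.
\]
This uses \(\phi(w),\psi(w) \in C \leq Z(G)\). Hence, if \(c \Rconj{\restr{\phi}{C},\restr{\psi}{C}} c'\) in \(C\), then \(gc \Rconj{\phi,\psi} gc'\). Consequently, the map \(\Reid[\restr{\phi}{C},\restr{\psi}{C}] \to \Reid[\phi,\psi]\colon [c] \mapsto [gc]_{\phi,\psi}\) is well defined. By the previous step its image is exactly the fibre over \([\pi(g)]_{\bar\phi,\bar\psi}\).

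So each fibre has at most \(R(\restr{\phi}{C},\restr{\psi}{C})\) elements. Summing over the \(R(\bar\phi,\bar\psi)\) fibres gives the inequality.

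The only delicate point is the centrality step in the previous paragraph: we need \(\phi(w)\) and \(\psi(w)\) to commute past \(g\) and \(c\). This is exactly where the hypotheses \(C \leq Z(G)\) and \(\phi(C),\psi(C) \leq C\) are used. Finiteness of \(G\) is never needed.
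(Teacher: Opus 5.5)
Your proof is correct and takes essentially the same approach as the paper. The paper fixes representatives \(c_1,\dots,c_m\) of the classes in \(C\) and \(g_1C,\dots,g_nC\) of the classes in \(G/C\), and shows directly that every \(g\) is \((\phi,\psi)\)-conjugate to some \(c_ig_j\) using exactly your two moves: lift a twisted conjugation from \(G/C\) to land in the coset \(g_jC\), then twist inside \(C\) using centrality; your fibre count over \(\Reid[\phi,\psi] \to \Reid[\bar\phi,\bar\psi]\) just packages that argument more explicitly.
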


This inequality follows from the arguments presented in Section 2 of \cite{GoncalvesWong05}, where the authors actually obtain equality under additional assumptions. We give an elementary proof below to keep the paper self-contained.

\begin{proof}
We may assume both \(R(\restr{\phi}{C},\restr{\psi}{C})\) and \(R(\bar{\phi},\bar{\psi})\) are finite, otherwise the statement is vacuously true. Choose representatives \(c_1, \ldots, c_m \in C\) of the \((\restr{\phi}{C},\restr{\psi}{C})\)-conjugacy classes and representatives \(g_1C, \ldots, g_nC \in G/C\) of the \((\bar{\phi},\bar{\psi})\)-conjugacy classes.

Let \(g \in G\) arbitrarily. Then for some \(j \in \{1, \ldots, n\}\) and \(hC \in G/C\), we have
\[
    gC = \bar{\phi}(hC)g_jC\bar{\psi}(hC)^{-1} = \phi(h)g_j\psi(h)^{-1}C.
\]
Hence, there exists a \(c \in C\) such that
\begin{equation}
\label{eq:uppboundeq1}
    g = \phi(h)g_j\psi(h)^{-1}c.
\end{equation}
Working inside \(C\), for some \(i \in \{1,\ldots,m\}\) and \(k \in C\) we have
\begin{equation}
\label{eq:uppboundeq2}
    c = \phi(k)c_i\psi(k)^{-1}.
\end{equation}
Combining \eqref{eq:uppboundeq1} and \eqref{eq:uppboundeq2}, and using that \(C\) is central, we get
\[
    g = \phi(h)g_j\psi(h)^{-1}\phi(k)c_i\psi(k)^{-1} = \phi(kh)c_ig_j\psi(kh)^{-1}.
\]
Thus, each \(g \in G\) is \((\phi,\psi)\)-conjugate to some element of the form \(c_ig_j\), and hence there are at most
\(mn = R(\restr{\phi}{C},\restr{\psi}{C})R(\bar{\phi},\bar{\psi})\) \((\phi,\psi)\)-conjugacy classes.
\end{proof}

For finite groups, we have an additional lower bound.

\begin{prop}	\label{prop:ReidemeisterNumberOnCentreIsLowerBoundFiniteGroups}
	Let \(G\) be a finite group and \(C \leq Z(G)\) a central subgroup. Let \(\phi,\psi \in \End(G)\) be such that \(\phi(C),\psi(C) \leq C\). Let \(\restr{\phi}{C}\) and \(\restr{\psi}{C}\) denote the restricted endomorphisms on \(C\). Then \(R(\restr{\phi}{C},\restr{\psi}{C}) \leq R(\phi,\psi)\).
\end{prop}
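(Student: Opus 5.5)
The plan is to count orbits with Burnside's lemma for the action \((g, h) \mapsto \phi(g) h \inv{\psi(g)}\), both on \(G\) and on \(C\). In both cases every summand is non-negative, and the summands for \(C\) reappear with the same weight among the summands for \(G\).

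First, for the group \(G\) acting on itself via \(T_{\phi,\psi}\), Burnside's lemma (equivalently \cref{prop:ReidemeisterNumberInproductBitwistedRepresentation1} combined with \(\theta_{\phi,\psi}(g) = \card{\Fix(T_{\phi,\psi}(g))}\), as in the proof of \cref{lem:ExpressionInproductBitwistedRepresentationIrreducibleCharacter}) gives
\[
	R(\phi, \psi) = \frac{1}{\card{G}} \sum_{g \in G} \card{\set{x \in G}{\phi(g) x = x \psi(g)}}.
\]
Every summand is a non-negative integer.

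Next, I apply the same formula to the finite abelian group \(C\) with \(\restr{\phi}{C}, \restr{\psi}{C}\). Since \(C\) is abelian, the condition \(\phi(c) y = y \psi(c)\) for \(y \in C\) is equivalent to \(\phi(c) = \psi(c)\). So the summand for \(c\) is \(\card{C}\) if \(c \in \Coin(\restr{\phi}{C}, \restr{\psi}{C})\) and \(0\) otherwise. Hence
\[
	R(\restr{\phi}{C}, \restr{\psi}{C}) = \card{\Coin(\restr{\phi}{C}, \restr{\psi}{C})}.
\]

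Finally, in the sum for \(G\) I discard every \(g \notin \Coin(\restr{\phi}{C}, \restr{\psi}{C})\), which only decreases the sum. For \(c \in C\) with \(\phi(c) = \psi(c)\), the element \(\phi(c)\) lies in \(C \leq Z(G)\), because \(\phi(C) \leq C\). Therefore \(\phi(c) x = x \psi(c)\) holds for all \(x \in G\), and the summand equals \(\card{G}\). This yields
\[
	R(\phi,\psi) \geq \frac{1}{\card{G}} \cdot \card{G} \cdot \card{\Coin(\restr{\phi}{C}, \restr{\psi}{C})} = R(\restr{\phi}{C}, \restr{\psi}{C}).
\]
No step is a real obstacle. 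The only points needing care are that \(\phi(C) \leq C\) is used to make \(\phi(c)\) central, and that the two Burnside sums are normalised by \(\card{G}\) and \(\card{C}\) respectively, and these normalisations cancel exactly against the full fixed-set sizes \(\card{G}\) and \(\card{C}\).
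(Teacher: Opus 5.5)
Your proof is correct, and it takes a somewhat different, more elementary route than the paper.

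The paper starts from the conjugacy-class formula of \cref{prop:biTwistedConjugacyNumberInTermsOfConjugacyClasses}, which it obtained through the character-theoretic trace formula. It then does three things:
\begin{itemize}
\item it keeps only the singleton classes \(\{c\}\) with \(c \in C\);
\item it shrinks \(\inv{\phi}(c) \cap \inv{\psi}(c)\) to its intersection with \(C\);
\item it recognises what remains as the same formula applied to \(C\).
\end{itemize}

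You instead apply Burnside's lemma directly to both actions. You evaluate the \(C\)-sum explicitly as \(\card{\Coin(\restr{\phi}{C},\restr{\psi}{C})}\). You then bound the \(G\)-sum below by the contributions of those coincidence points, each of which fixes all of \(G\) because \(\phi(c) = \psi(c)\) is central.

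The two computations are really the same sum with different bookkeeping. Group your Burnside sum \(\frac{1}{\card{G}}\sum_{g}\card{\set{x \in G}{\phi(g)x = x\psi(g)}}\) by the conjugacy class of \(\phi(g)\): each \(g\) with \(\phi(g)\) and \(\psi(g)\) in the same class \(K\) contributes \(1/\card{K}\). This gives exactly the conjugacy-class formula. Correspondingly, the paper's intermediate bound is \(R(\phi,\psi) \geq \card{\set{g \in G}{\phi(g) = \psi(g) \in C}}\), which your discarding step could equally have kept.

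What your version buys:
\begin{itemize}
\item It is self-contained: it needs only Burnside's lemma, with no characters or orthonormal bases of \(\Class(G)\).
\item It makes explicit that \(R(\restr{\phi}{C},\restr{\psi}{C}) = \card{\Coin(\restr{\phi}{C},\restr{\psi}{C})}\). This matches the paper's abelian remark, since \(\card{\ker(\phi-\psi)} = [C : \im(\phi-\psi)]\).
\end{itemize}

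What the paper's version buys is reuse of a formula it has already established, and consistency with its theme of computing Reidemeister numbers by counting ordinary conjugacy classes.
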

\begin{proof}
	Since \(C\) is central, the \(C\)-conjugacy class of \(c \in C\) is a singleton and coincides with the \(G\)-conjugacy class. From \cref{prop:biTwistedConjugacyNumberInTermsOfConjugacyClasses} we get that
	\begin{align*}
		R(\phi,\psi)	&= \sum_{[g] \in \Conj(G)} \frac{\card{(\inv{\phi}([g]) \cap \inv{\psi}([g]))}}{\card{[g]}}	\\
					& \geq \sum_{[g] \in \Conj(C)} \frac{\card{(\inv{\phi}([g]) \cap \inv{\psi}([g]))}}{\card{[g]}}	\\
					& \geq \sum_{[g] \in \Conj(C)} \frac{\card{(\invrestr{\phi}{C}([g]) \cap \invrestr{\psi}{C}([g]))}}{\card{[g]}}	\\
					& = R(\restr{\varphi}{C},\restr{\psi}{C}). \qedhere
	\end{align*}
\end{proof}

Finally, for single twisted conjugacy, \cref{theo:AdoReeNagao} clearly implies that Reidemeister numbers \(R(\psi)\) are bounded above by the number of ordinary conjugacy classes of the group.
At a first glance, for bi-twisted conjugacy, no such upper bound exists.
Indeed, if \(\tau\) is the endomorphism that maps everything to the identity, then \(R(\tau,\tau) = \card{G}\).

However, upon taking a closer look, we note that this maximal Reidemeister number is an isolated case.
We show that there exists a \(k \leq \frac{3}{4}\card{G}\) such that no pair of endomorphisms \((\phi,\psi)\) can have Reidemeister number \(\card{G} > R(\phi,\psi) > k\). To prove this, we will require the following technical lemma.

\begin{lemma}
\label{lem:Hconjugacybound}
    Let \(G\) be a non-abelian finite group and let \(H\) be a non-central subgroup of \(G\). Let \(p\) be the smallest prime divisor of \(\card{G}\), and let \(q\) be either the second smallest prime factor (if \(p^3 \nmid \card{G}\)) or equal to \(p\) (if \(p^3 \mid \card{G}\)). Let \(k_H(G)\) denote the number of \(H\)-conjugacy classes in \(G\). Then
    \[
     k_H(G)  \leq \frac{p+q-1}{pq} \card{G}.
    \]
\end{lemma}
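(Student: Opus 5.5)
We count the \(H\)-conjugacy classes in \(G\) with Burnside's lemma for the conjugation action of \(H\) on \(G\):
\[
	k_H(G) = \frac{1}{\card{H}} \sum_{h \in H} \card{C_G(h)}.
\]
We split the sum according to whether \(h\) is central. Put \(Z \coloneq Z(G) \cap H\). For \(h \in Z\) we use \(\card{C_G(h)} = \card{G}\). For \(h \in H \setminus Z\) the centraliser \(C_G(h)\) is a proper subgroup of \(G\), so \(\card{C_G(h)} \leq \card{G}/p\). We want something sharper, so we keep the general form
\[
	k_H(G) \leq \card{G}\left(\frac{\card{Z}}{\card{H}} + \Big(1 - \frac{\card{Z}}{\card{H}}\Big)\frac{1}{m}\right),
\]
where \(m\) is a lower bound on \([G : C_G(h)]\) for noncentral \(h\). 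Since \(H\) is non-central, \(Z\) is a proper subgroup of \(H\). Hence \([H:Z] \geq p'\), where \(p'\) is the smallest prime dividing \(\card{H}\), and \(p' \geq p\). Taking \(m = p\) and \([H:Z] \geq p\) gives \(k_H(G) \leq \card{G}\big(\tfrac1p + (1-\tfrac1p)\tfrac1p\big) = \frac{2p-1}{p^2}\card{G}\). This is exactly the claimed bound when \(q = p\).

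For the case \(q \neq p\), that is \(p^3 \nmid \card{G}\), the key step is to improve one of the two estimates so that one factor \(p\) becomes \(q\). We argue through the quantity \(x = \card{Z}/\card{H} \leq 1/[H:Z]\). The bound is increasing in \(x\), so we want \([H:Z]\) large or \(m\) large.

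\textbf{Case A.} Suppose \([H:Z] \geq q\). With \(m = p\) this gives \(\frac1q + \frac{q-1}{qp} = \frac{p+q-1}{pq}\).

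\textbf{Case B.} Suppose \([H:Z] = p\). Then \(H/Z\) is cyclic of order \(p\), so \(H = \langle h, Z\rangle\) with \(Z\) central, and every noncentral element of \(H\) centralises \(H\). We then aim to show \([G : C_G(h)] \geq q\) for each noncentral \(h \in H\), which again gives \(\frac1p + \frac{p-1}{p}\cdot\frac1q = \frac{p+q-1}{pq}\).

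The main obstacle is proving this index bound in Case B. Suppose instead that \([G:C_G(h)] = p\). We try to derive a contradiction with \(p^3 \nmid \card{G}\) and \(G\) non-abelian. Since \(C_G(h)\) has index \(p\), the smallest prime, it is normal in \(G\). Also \(h\) is noncentral, and \(Z(G) < C_G(h)\) since \(h \in C_G(h)\setminus Z(G)\). The Sylow \(p\)-subgroup of \(G\) has order at most \(p^2\), so it is abelian. The class of \(h\) has size \(p\) and lies in the coset \(hG'\). We would exploit that a conjugacy class of size \(p\) forces \(p \mid \card{G/C_G(h)}\) and, through the transfer or Burnside's normal \(p\)-complement theorem applied to an abelian Sylow \(p\)-subgroup, that the \(p\)-part of \(G\) is too small. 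One expects that a group with \(p^3 \nmid \card{G}\) and a class of size \(p\) has a central structure incompatible with \(h \notin Z(G)\). If this contradiction fails in general, the fallback is to refine the averaging instead: bound \(\sum_{h \in H\setminus Z}\card{C_G(h)}\) by splitting \(H \setminus Z\) into elements whose centraliser has index \(p\) and those whose centraliser has index at least \(q\). We would then show the former set is small, for instance that it is contained in a subgroup of index at least \(q\) in \(H\), or that it is empty when \(\card{G}_p \le p^2\).
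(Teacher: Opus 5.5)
Your Burnside count \(k_H(G)=\frac{1}{\card{H}}\sum_{h\in H}\card{C_G(h)}\), the estimate \(k_H(G)\le \card{G}\,(x+(1-x)/m)\), the case \(q=p\) and your Case A are all correct. This is a dual version of the paper's count, which sums over \(g\in G\) and splits off \(C_G(H)\) rather than \(H\cap Z(G)\).

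\textbf{The gap is Case B.} You never prove \([G:C_G(h)]\ge q\); you only sketch hopes. The principle you appeal to is false: a non-central element whose class has size \(p\) is compatible with \(p^3\nmid\card{G}\). In \(S_3\) with \(p=2\), \(h=(1\,2\,3)\) has \([G:C_G(h)]=2\). The same example refutes your fallback claim that such elements cannot exist when \(\card{G}_p\le p^2\).

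Transfer and Burnside's normal \(p\)-complement theorem are beside the point. What must be used is the Case B hypothesis that \(hZ\) has \(p\)-power order in \(H/Z\). The missing step is short:
\begin{itemize}
\item Write \(h=h_ph_{p'}\), with both factors powers of \(h\).
\item Since \(H/Z\) is a \(p\)-group, \(h_{p'}\in Z\le Z(G)\), so \(C_G(h)=C_G(h_p)\).
\item \(h_p\) lies in a Sylow \(p\)-subgroup \(P\) of \(G\), which is abelian because \(\card{P}\le p^2\). Hence \(P\le C_G(h)\) and \(p\nmid[G:C_G(h)]\).
\item As \(h\notin Z(G)\), this index exceeds \(1\), so it is at least \(q\).
\end{itemize}
This is the same observation as \(G_p\le C_G(H_p)\) in the paper's second case.

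\textbf{Your two cases are also not exhaustive.} If \([H:Z]<q\), then \([H:Z]\) is a power of \(p\), so \([H:Z]\in\{p,p^2\}\), and \(p^2<q\) does occur. Take the Frobenius group \(G=C_5\rtimes C_4\) of order \(20\) and \(H\) a Sylow \(2\)-subgroup. Then \(p=2\), \(q=5\), \(Z=1\) and \([H:Z]=4\). With \(x=\frac{1}{4}\) and \(m=p=2\), your estimate gives \(\frac{5}{8}\card{G}\), which exceeds the target \(\frac{3}{5}\card{G}\). So this case genuinely needs \(m=q\).

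The argument above covers it verbatim, since it only uses that \(H/Z\) is a \(p\)-group. The correct dichotomy is therefore:
\begin{itemize}
\item if \([H:Z]\) is not a power of \(p\), then \([H:Z]\ge q\) and \(m=p\) suffices;
\item if it is, then \(x\le 1/p\) and \(m=q\).
\end{itemize}
Either way the bound \(\frac{p+q-1}{pq}\card{G}\) follows. With this repair your route is a complete proof, and it even avoids the paper's reduction \(k_H(G)\le k_{H_p}(G)\).
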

\begin{proof}
Suppose there exist \(a,b \in \N\) such that \([G:C_G(H)] \geq a\), and such that \([H:C_H(g)] \geq b\) for all \(g \notin C_{G}(H)\). Then
\begin{align*}
    k_H(G)
    &= \card{C_G(H)} + \sum_{g \notin C_G(H)} \frac{1}{[H:C_H(g)]}\\
    &\leq \card{C_G(H)} + \frac{\card{G} - \card{C_G(H)}}{b}\\
    &= \frac{(b-1)\card{C_G(H)} + \card{G}}{b}\\
    &\leq \frac{a + b - 1}{ab}\card{G}.
    \end{align*}
    It remains to prove that we can always pick \((a,b)\) to be either \((p,q)\) or \((q,p)\). Since \(H\) is non-central, \(C_G(H) \neq G\) and therefore \([G:C_G(H)] \geq p\). Also, if \(g \notin C_G(H)\), then \(C_H(g)\) is a proper subgroup of \(H\), thus \([H:C_H(g)] \geq p\).

    If \(p^3 \mid \card{G}\), and therefore \(p = q\), we are done. If \(p^3 \nmid \card{G}\), and therefore \(p < q\), we need to show that one of these indices is actually greater than or equal to \(q\). Let \(H_p\) be a Sylow \(p\)-subgroup of \(H\), and let \(G_p\) be a Sylow \(p\)-subgroup of \(G\) containing \(H_p\). We consider two cases:
    \begin{enumerate}
        \item \(H_p \leq Z(G)\). Then \(H_p \leq C_H(g)\) for every \(g \in G\), hence \(p\) does not divide \([H:C_H(g)]\). So if \(g \notin C_G(H)\), the index \([H:C_H(g)]\) is at least \(q\).
        \item \(H_p \not\leq Z(G)\). Since \(k_{H}(G) \leq k_{H_p}(G)\), we may assume that \(H = H_p\). Because \(G_p\) has order \(p\) or \(p^2\), it is abelian. Hence, \(H_p \leq Z(G_p)\) and thus \(G_p \leq C_G(H_p)\). Then \(p\) does not divide \([G:C_G(H_p)]\), and thus this index is at least \(q\).
    \end{enumerate}
    Thus, we can always pick \((a,b)\) to be either \((p,q)\) or \((q,p)\).
\end{proof}

\upperboundBiTwistedConjugacyNumbers
\begin{proof}
    We only prove that (1) implies (2), since the implications (2) \(\implies\) (3) and (3) \(\implies\) (1) are both obvious.
    From \cref{prop:biTwistedConjugacyNumberInTermsOfConjugacyClasses} we know that \(R(\phi,\phi) \geq R(\phi,\psi)\). The \((\phi,\phi)\)-conjugacy action coincides with the \(\im(\phi)\)-conjugacy action, and thus \(R(\phi,\phi) = k_{\im(\phi)}(G)\). By \cref{lem:Hconjugacybound} this means that \(\im(\phi) \leq Z(G)\), and similarly, we obtain that \(\im(\psi) \leq Z(G)\).

    Therefore, the map \(\delta \colon G \to G \colon g \mapsto \phi(g)\psi(g)^{-1}\) is an endomorphism, and \(R(\phi,\psi) = [ G : \im(\delta) ]\). From the original inequality then follows that \(\card{\im(\delta)} < \frac{pq}{p+q-1} < p\), hence \(\delta\) must be the trivial map, i.e.\@ \(\phi = \psi\).
\end{proof}

This bound is optimal, as the following examples show.

\begin{example}
    Let \(G\) be a non-abelian group of order \(pq\), with \(p < q\) primes. Then \(G\) is a semi-direct product \(C_q \rtimes C_p\) of two cyclic groups, so we can choose \(x,y \in G\) such that \(C_q = \langle x \rangle\) and \(C_p = \langle y \rangle\).
    The conjugacy classes of \(G\) are:
    \begin{itemize}
        \item one central class, \([1]\),
        \item \(p-1\) classes of size \(q\): \([y], \ldots, [y^{p-1}]\),
        \item \(\frac{q-1}{p}\) classes of size \(p\): \([x], \ldots, [x^{\frac{q-1}{p}}]\).
    \end{itemize}
    Define the endomorphism \(\phi\) as
    \[
        \phi(x) = 1, \qquad \phi(y) = y.
    \]
    Applying the summation formula from \cref{prop:ReidemeisterCoincidenceNumberEqualsConjugacySum} gives
    \[
        R(\varphi,\varphi) = \frac{1}{1} + (p-1)\frac{q}{q} + \frac{q-1}{p}\frac{p}{1} = p+q-1 = \frac{p+q-1}{pq} \card{G}.
    \]
    So, the bound from \cref{prop:upperboundBiTwistedConjugacyNumbers} is indeed attained.
\end{example}

\begin{example}
	Let \(p\) be a prime number and \(e \in \{1, 2\}\).
	Let \(G_{p,1,e}\) be the group given by the presentation
	\[
		\grppres{x, y, z}{x^p = y^p= z^{e - 1}, xy = yxz, z^p = [x, z] = [y, z] = 1},
	\]
	i.e.\@ the extraspecial group of order \(p^3\) with, if \(p > 2\), exponent \(p^e\). If \(p = 2\), the exponent of \(G_{2,1,e}\) is always \(4\).
    
    An extraspecial group of order \(p^{2n+1}\) is the central product of \(n\) extraspecial groups of order \(p^3\). For every \(n\) there are only two such groups up to isomorphism. We define \(G_{p,n,1}\) to be the central product of \(n\) copies of \(G_{p,1,1}\), and \(G_{p,n,2}\) the central product of \(n-1\) copies of \(G_{p,1,1}\) and a single copy of \(G_{p,1,2}\).
    
    Fix a triple \((p,n,e)\) with \(p\) odd. Let \(G_{p,n,e}\) be generated by \(x_1\), \(y_1\), \(x_2\), \(y_2\), \ldots, \(x_n\), \(y_n\), \(z\) where \(x_i\), \(y_i\), \(z\) are the generators of the \(i\)-th factor in the central product. Every non-central element has centraliser of order \(p^{2n}\) and hence conjugacy class of size \(p\). Thus, there are \(p^{2n}-1\) distinct conjugacy classes of non-central elements, and they are given by
    \[
        \set{ [x_1^{a_1} y_1^{b_1} \dots x_n^{a_n} y_n^{b_n}] }{ 0 \leq a_i,b_j < p\,;\ (a_1,b_1,\dots,a_n,b_n) \neq (0,\dots,0)}.
    \]
    Let \(\phi\) be the endomorphism defined by
    \begin{align*}
        &\phi(x_1) = x_1 y_1^{-1},&&  \phi(x_i) = 1 \quad (2 \leq i \leq n),\\
        &\phi(y_j) = 1 \quad (1 \leq j \leq n), && \phi(z) = 1.
    \end{align*}
    This endomorphism maps:
    \begin{itemize}
        \item \((p-1)p^{2n-1}\) of the non-central conjugacy classes (i.e.\@ those with \(a_1 \neq 0\)) to another non-central conjugacy class,
        \item \(p^{2n-1}-1\) of the non-central conjugacy classes (i.e.\@ those with \(a_1 = 0\)) to a central conjugacy class,
        \item all \(p\) central conjugacy classes to another central conjugacy class.
    \end{itemize}
    Applying the summation formula from \cref{prop:ReidemeisterCoincidenceNumberEqualsConjugacySum} gives
    \[
        R(\varphi,\varphi) = (p-1)p^{2n-1} \frac{p}{p} + (p^{2n-1}-1) \frac{p}{1} + p\frac{1}{1} = 2p^{2n}-p^{2n-1} = \frac{2p-1}{p^2} \card{G_{p,n,e}},
    \]
    so indeed the bound from \cref{prop:upperboundBiTwistedConjugacyNumbers} is attained.
    
    For \(p = 2\) and either \(e=1\) or \(n \geq 2\), we can repeat the above process, although we have to modify the definition of \(\phi\) by setting \(\phi(x_1) = x_1y_1^2\). For the group \(G_{2,1,2} \cong Q_8\), however, no endomorphism \(\phi\) with \(R(\phi,\phi) = 6\) exists.
\end{example}

\begin{quest}
	Which finite groups \(G\) admit endomorphisms \(\phi, \psi\) such that \(R(\phi, \psi) = \frac{p+q - 1}{pq} \cdot \card{G}\), where \(p\) and \(q\) are defined as in \cref{prop:upperboundBiTwistedConjugacyNumbers}?
\end{quest}
\begin{remark}
	For a finite abelian group \(A\) and \(\phi, \psi \in \End(A)\), the optimal bound for non-trivial Reidemeister numbers is \(R(\phi, \psi) \leq \frac{1}{p} \cdot \card{A}\), with \(p\) the smallest prime dividing \(\card{A}\).
	Indeed, we have \(R(\phi, \psi) = [A \colon \im(\phi - \psi)]\), since
	\[
		x \Rconj{\phi, \psi} y \iff \exists a \in A  \colon x = \phi(a) + y - \psi(a) \iff x - y \in \im(\phi - \psi).
	\]
	Thus, \(R(\phi, \psi)\) divides \(\card{A}\).
	The largest non-trivial divisor of \(\card{A}\) is \(\frac{\card{A}}{p}\), and this number can be achieved.
	Write \(A = C_{p^n} \times B\) for some \(n \geq 1\) and some subgroup \(B\) of \(A\).
	Let \(x\) be a generator of \(C_{p^n}\) and define the endomorphism \(\phi \in \End(A)\) by \(\phi(x^i, b) = (x^{i(p + 1)}, b)\).
	Then \(\im(\phi - \Id) = \grpgen{x^p}\) and thus
	\[
		R(\phi, \Id) = [A : \grpgen{x^p}] = [C_{p^n} : \grpgen{x^p}] \cdot \card{B} = p^{n - 1} \cdot \card{B} = \frac{\card{A}}{p}.
	\]
	
	This lower bound is slightly lower than the one for non-abelian groups, since for primes \(p \leq q\), we have \(p + q - 1 > q\) and thus
	\[
		\frac{p + q - 1}{pq} > \frac{q}{pq} = \frac{1}{p}.
	\]

\end{remark}

\section*{Acknowledgements}
We thank the anonymous referee for pointing us to the results of I.\ Ado, R.\ Ree and H.\ Nagao, and to the various other places where twisted conjugacy in finite groups also arises.

\printbibliography

@article{Ado55,
	author = {Ado, Igor Dmitrievich},
	journal = {Matematicheskii Sbornik},
	number = {1},
	pages = {25--30},
	title = {{On the theory of linear representations of finite groups}},
	volume = {36(78)},
	year = {1955}}

@article{BardakovNasybullovNeshchadim13,
	author = {Bardakov, V. G. and Nasybullov, Timur and Neshchadim, M. V.},
	doi = {10.1134/S0037446613010023},
	journal = {Siberian Mathematical Journal},
	number = {1},
	pages = {10--21},
	title = {{Twisted conjugacy classes of the unit element}},
	volume = {54},
	year = {2013}}

@article{Chen22,
	author = {Chen, Zhe},
	doi = {10.1016/j.jalgebra.2022.06.029},
	journal = {Journal of Algebra},
	pages = {718--733},
	title = {{Twisting operators and centralisers of Lie type groups over local rings}},
	volume = {609},
	year = {2022}}

@article{DeligneLusztig76,
	author = {Deligne, Pierre and Lusztig, George},
	doi = {10.2307/1971021},
	journal = {Annals of Mathematics},
	number = {1},
	pages = {103--161},
	title = {{Representations of Reductive Groups Over Finite Fields}},
	volume = {103},
	year = {1976}}

@article{Deshpande16,
	author = {Deshpande, Tanmay},
	doi = {10.1112/S0010437X16007429},
	journal = {Compositio Mathematica},
	number = {8},
	pages = {1697--1724},
	title = {{Shintani descent for algebraic groups and almost characters of unipotent groups}},
	volume = {152},
	year = {2016}}

@article{FelshtynHill94,
	author = {Fel'shtyn, Alexander Leopol'dovich and Hill, Richard},
	doi = {10.1007/BF00961408},
	journal = {K-Theory},
	number = {4},
	pages = {367--393},
	title = {{The Reidemeister Zeta Function with Applications to Nielsen Theory and a Connection with Reidemeister Torsion}},
	volume = {8},
	year = {1994}}

@article{FelshtynKlopsch22,
	author = {Fel'shtyn, Alexander Leopol'dovich and Klopsch, Benjamin},
	doi = {10.1016/j.indag.2022.02.004},
	journal = {Indagationes Mathematicae},
	number = {4},
	pages = {753--767},
	title = {{P\'{o}lya-Carlson dichotomy for coincidence Reidemeister zeta functions via profinite completions}},
	volume = {33},
	year = {2022}}

@article{Formanek71,
	author = {Formanek, Edward},
	doi = {10.1090/S0002-9939-1971-0291312-5},
	journal = {Proceedings of the American Mathematical Society},
	number = {1},
	pages = {73--74},
	title = {{The conjugation representation and fusionless extensions}},
	volume = {30},
	year = {1971}}

@article{Frame47,
	author = {Frame, J. Sutherland},
	doi = {10.1090/S0002-9904-1947-08839-X},
	journal = {Bulletin of the American Mathematical Society},
	number = {6},
	pages = {584--589},
	title = {{On the reduction of the conjugating representation of a finite group}},
	volume = {53},
	year = {1947}}

@article{GoncalvesNasybullov18,
	author = {Gon\c{c}alves, Daciberg Lima and Nasybullov, Timur},
	doi = {10.1080/00927872.2018.1498873},
	journal = {Communications in Algebra},
	number = {3},
	pages = {930--944},
	title = {{On groups where the twisted conjugacy class of the unit element is a subgroup}},
	volume = {47},
	year = {2018}}

@article{GoncalvesWong05,
	author = {Gon\c{c}alves, Daciberg Lima and Wong, Peter N.},
	doi = {10.1515/form.2005.17.2.297},
	journal = {Forum Mathematicum},
	number = {2},
	pages = {297--313},
	title = {{Homogeneous spaces in coincidence theory II}},
	volume = {17},
	year = {2005}}

@article{GorensteinHerstein61,
	author = {Gorenstein, Daniel and Herstein, I. N.},
	doi = {10.2307/2372721},
	journal = {American Journal of Mathematics},
	number = {1},
	pages = {71--78},
	title = {{Finite Groups Admitting a Fixed-Point Free Automorphism of Order \(4\)}},
	volume = {83},
	year = {1961}}

@book{Jiang83,
	address = {Providence, Rhode Island},
	author = {Jiang, Boju},
	doi = {10.1090/conm/014},
	publisher = {American Mathematical Society},
	series = {Contemporary Mathematics},
	title = {{Lectures on Nielsen Fixed Point Theory}},
	volume = {14},
	year = {1983}}

@book{Karpilovsky92,
	author = {Karpilovsky, Gregrory},
	publisher = {Elsevier Science},
	series = {Group Representations},
	title = {{Introduction to Group Representations and Characters}},
	volume = {1},
	part = {B},
	year = {1992}}

@article{Lusztig14,
	author = {Lusztig, George},
	doi = {10.1090/S1088-4165-2014-00455-2},
	journal = {Representation Theory of the American Mathematical Society},
	number = {8},
	pages = {223--277},
	title = {{Distinguished Conjugacy Classes and Elliptic Weyl Group Elements}},
	volume = {18},
	year = {2014}}

@article{Nagao62,
	author = {Nagao, Hirosi},
	journal = {Journal of Mathematics, Osaka City University},
	number = {1},
	pages = {35--38},
	title = {{On a conjecture of Brauer for \(p\)-solvable groups}},
	volume = {13},
	year = {1962}}

@article{Nicotera24,
	author = {Nicotera, Chiara},
	doi = {10.1007/s00013-024-02025-6},
	journal = {Archiv der Mathematik},
	number = {3},
	pages = {225--232},
	title = {{On finite groups in which the twisted conjugacy classes of the unit element are subgroups}},
	volume = {123},
	year = {2024}}

@article{Ree59,
	author = {Ree, Rimhak},
	doi = {10.1215/ijm/1255455264},
	journal = {Illinois Journal of Mathematics},
	number = {3},
	pages = {440--444},
	title = {{On Generalized Conjugate Classes in a Finite Group}},
	volume = {3},
	year = {1959}}

@article{Romankov16,
	author = {Roman'kov, Vitalii Anatol'evich},
	doi = {10.17377/semi.2016.13.056},
	journal = {Sibirskie {\`E}lektronnye Matematicheskie Izvestiya},
	pages = {716--725},
	title = {{On solvability of equations with endomorphisms in nilpotent groups}},
	volume = {13},
	year = {2016}}

@article{Roth71,
	author = {Roth, Richard Lewis},
	doi = {10.2140/pjm.1971.36.515},
	journal = {Pacific Journal of Mathematics},
	number = {2},
	pages = {515--521},
	title = {{On the Conjugating Representation of a Finite Group}},
	volume = {36},
	year = {1971}}

@article{Rowley95,
	author = {Rowley, Peter},
	doi = {10.1006/jabr.1995.1148},
	journal = {Journal of Algebra},
	number = {2},
	pages = {724--727},
	title = {{Finite Groups Admitting a Fixed-Point-Free Automorphism Group}},
	volume = {174},
	year = {1995}}

@misc{Senden21a,
	archiveprefix = {arXiv},
	author = {Senden, Pieter},
	doi = {10.48550/arXiv.2109.12892},
	eprint = {2109.12892},
	primaryclass = {math.GR},
	title = {{The Reidemeister spectrum of split metacyclic groups}},
	year = {2021}}

@phdthesis{Senden23,
	author = {Senden, Pieter},
	school = {KU Leuven},
	title = {{How does the structure of a group determine its Reidemeister spectrum?}},
	year = {2023}}

@article{Senden23a,
	author = {Senden, Pieter},
	doi = {10.1017/S0013091523000500},
	journal = {Proceedings of the Edinburgh Mathematical Society},
	number = {4},
	pages = {940--959},
	title = {{The Reidemeister spectrum of finite abelian groups}},
	volume = {66},
	year = {2023}}

@book{Serre77,
	address = {New York},
	author = {Serre, Jean-Pierre},
	doi = {10.1007/978-1-4684-9458-7},
	place = {New York},
	publisher = {Springer, New York},
	series = {Graduate Texts in Mathematics},
	title = {{Linear Representations of Finite Groups}},
	volume = {42},
	year = {1977}}

@article{Shintani76,
	author = {Shintani, Takuro},
	doi = {10.2969/jmsj/02820396},
	journal = {Journal of the Mathematical Society of Japan},
	number = {2},
	pages = {396--414},
	title = {{Two remarks on irreducible characters of finite general linear groups}},
	volume = {28},
	year = {1976}}

@inproceedings{Shoji85,
	author = {Shoji, Toshiaki},
	booktitle = {Algebraic Groups and Related Topics},
	doi = {10.2969/aspm/00610207},
	editor = {Hotta, R.},
	pages = {207--229},
	title = {{Some Generalization of Asai's Result for Classical Groups}},
	year = {1985}}

@article{Shoji87,
	author = {Shoji, Toshiaki},
	doi = {10.15083/00039463},
	journal = {Journal of the Faculty of Science. University of Tokyo. Section IA. Mathematics},
	number = {3},
	pages = {599--653},
	title = {{Shintani descent for exceptional groups over a finite field}},
	volume = {34},
	year = {1987}}

@article{Shoji92,
	author = {Shoji, Toshiaki},
	doi = {10.1016/0021-8693(92)90113-Z},
	journal = {Journal of Algebra},
	number = {2},
	pages = {468--524},
	title = {{Shintani Descent for Algebraic Groups over a Finite Field, I}},
	volume = {145},
	year = {1992}}

@article{ShumyatskyTamarozzi02,
	author = {Shumyatsky, Pavel and Tamarozzi, Antonio},
	doi = {10.1081/AGB-120003992},
	journal = {Communications in Algebra},
	number = {6},
	pages = {2837--2842},
	title = {{On finite groups with fixed-point free automorphisms}},
	volume = {30},
	year = {2002}}

@article{Solomon61,
	author = {Solomon, Louis},
	doi = {10.1090/S0002-9939-1961-0132783-8},
	journal = {Proceedings of the American Mathematical Society},
	number = {6},
	pages = {962--963},
	title = {{On the sum of the elements in the character table of a finite group.}},
	volume = {12},
	year = {1961}}

@article{Tertooy25,
	author = {Tertooy, Sam},
	doi = {10.1515/jgth-2024-0056},
	journal = {Journal of Group Theory},
	number = {3},
	pages = {563--583},
	title = {{Twisted conjugacy and separability}},
	volume = {28},
	year = {2025}}

@article{Tertooy25a,
	author = {Tertooy, Sam},
	doi = {10.1016/j.topol.2024.109089},
	journal = {Topology and its Applications},
	eid = {109089},
	title = {{Extreme Reidemeister spectra of finite groups}},
	volume = {359},
	year = {2025}}

@article{Thompson59,
	author = {Thompson, John},
	doi = {10.1073/pnas.45.4.578},
	journal = {Proceedings of the National Academy of Sciences of the United States of America},
	number = {4},
	pages = {578--581},
	title = {{Finite Groups with Fixed-Point Free Automorphisms of Prime Order}},
	volume = {45},
	year = {1959}}

\end{document}